\numberwithin{equation}{subsection}
\DeclareMathOperator{\Flag}{Flag}
\DeclareMathOperator{\Gal}{Gal}
\theoremstyle{plain}
\newtheorem{theorem}{Theorem}[subsection]
\newtheorem*{thrm1}{Theorem A}
\newtheorem*{thrm2}{Theorem B}
\newtheorem{lemma}[theorem]{Lemma}
\newtheorem{cor}[theorem]{Corollary}
\newtheorem{defn}[theorem]{Definition}
\newtheorem{prop}[theorem]{Proposition}
\theoremstyle{definition}
\newtheorem{rem}[theorem]{Remark}
\newcommand\isomarrow{\stackrel{\cong}{\longrightarrow}}
\def\ge{\geqslant}
\def\a{\alpha}
\def\b{\beta}
\def\G{\Gamma}
\def\d{\delta}
\def\e{\epsilon}
\def\s{\sigma}
\def\th{\theta}
\def\k{\kappa}
\def\l{\lambda}
\def\i{^{-1}}
\def\tSS{\tilde{\mathbb S}}
\def\SS{\mathbb S}
\def\ZZ{\mathbb Z}
\def\NN{\mathbb N}
\def\QQ{\mathbb Q}
\def\RR{\mathbb R}
\def\ca{\mathcal A}
\def\co{\mathcal O}
\def\tW{\tilde W}
\def\ba{\mathbf a}
\newcommand{\kk}{\Bbbk}
\newcommand{\Gad}{G_{\rm ad}}
\newcommand{\Gsc}{G_{\rm sc}}
\def\<{\langle}
\def\>{\rangle}
\begin{document}

\title[Affine Deligne-Lusztig varieties]{$P$-alcoves and nonemptiness of affine Deligne-Lusztig varieties}

\author[U. G\"{o}rtz]{Ulrich G\"{o}rtz}
\address{Ulrich G\"{o}rtz\\Institut f\"ur Experimentelle Mathematik\\Universit\"at Duisburg-Essen\\Ellernstr.~29\\45326 Essen\\Germany}
\email{ulrich.goertz@uni-due.de}
\thanks{G\"{o}rtz was partially supported by the Sonderforschungsbereich TR 45 ``Periods, Moduli spaces and Arithmetic of Algebraic Varieties'' of the Deutsche Forschungsgemeinschaft.}

\author[X. He]{Xuhua He}
\address{Xuhua He, Department of Mathematics, The Hong Kong University of Science and Technology, Clear Water Bay, Kowloon, Hong Kong}
\thanks{Xuhua He was partially supported by HKRGC grant 602011.}
\email{maxhhe@ust.hk}

\author[S. Nie]{Sian Nie}
\address{Sian Nie, IHES,  Le Brois-Marie, 35 Rue de Chartres, 91440 Bures-sur-Yvette, France}
\email{niesian@ihes.fr}

\begin{abstract}
We study affine Deligne-Lusztig varieties in the affine flag manifold of an algebraic group, and in particular the question, which affine Deligne-Lusztig varieties are non-empty. Under mild assumptions on the group, we provide a complete answer to this question in terms of the underlying affine root system. In particular, this proves the corresponding conjecture for split groups stated in \cite{GHKR2}. The question of non-emptiness of affine Deligne-Lusztig varieties is closely related to the relationship between certain natural stratifications of moduli spaces of abelian varieties in positive characteristic.
\end{abstract}

\maketitle

\section{Introduction}

\subsection{}
Affine Deligne-Lusztig varieties (see below for the definition) are the analogues of Deligne-Lusztig varieties in the context of an affine root system, and hence are natural objects which deserve to be studied in their own interest. Furthermore, results about them have direct applications to certain questions in arithmetic geometry, specifically to moduli spaces of $p$-divisible groups and reductions of Shimura varieties. More concretely, if $\mathcal M$ is a Rapoport-Zink space, then $\mathcal M(\kk)$ can be identified by Dieudonn\'e theory with a (mixed-characteristic) affine Deligne-Lusztig variety. In this case, the formal scheme $\mathcal M$ provides a scheme structure. See \cite{GHKR1} 5.10 for further information on this connection.

\subsection{}
Let $\mathbb F_q$ be the finite field with $q$ elements. Let $\kk$ be an algebraic closure of $\mathbb F_q$. Consider one of the following two cases:
\begin{itemize}
\item
Mixed characteristic case. Let $F/\mathbb Q_p$ be a finite field extension with residue class field $\mathbb F_q$, and let $L$ be the completion of the maximal unramified extension of $F$. Denote by $\varepsilon$ a uniformizer of $F$.
\item
Equal characteristic case. Let $F= \mathbb F_q( (\e))$, the field of Laurent series over $\mathbb F_q$, and $L:=\kk( (\e))$, the field of Laurent series over $\kk$. As in the previous case, $L$ is the completion of the maximal unramified extension of $F$.
\end{itemize}

Let $G$ be a connected semisimple group over $F$ which splits over a tamely ramified extension of $F$.  Let $\s$ be the Frobenius automorphism of $L/F$. We also denote the induced automorphism on $G(L)$ by $\s$.

We fix a $\s$-invariant Iwahori subgroup $I\subset G(L)$. In the equal characteristic case we can view $G(L)/I$ as the $\kk$-points of an ind-projective ind-scheme $\Flag$ over $\kk$, the affine flag variety for $G$, see~\cite{pappas-rapoport:twisted}. The $I$-double cosets in $G(L)$ are parameterized by the extended affine Weyl group $\tW$. The automorphism on $\tW$ induced by $\s$ is denoted by $\d\colon \tW\to \tW$. Furthermore we denote by $\mathbb S\subseteq \tW$ the set of simple affine reflections.

Following Rapoport \cite{rapoport:guide}, we define:

\begin{defn}
Let $x\in \tW$, and $b\in G(L)$. The affine Deligne-Lusztig variety attached to $x$ and $b$ is the subset
\[
X_x(b) = \{ gI \in G(L)/I;\ g^{-1}b\sigma(g) \in I xI \}.
\]
\end{defn}

In the equal characteristic case, it is not hard to see that there exists a unique locally closed $X_x(b)\subset \Flag$ whose set of $\kk$-valued points is the subset $X_x(b)\subseteq G(L)/I$ defined above. Moreover, $X_x(b)$ is a finite-dimensional $\kk$-scheme, locally of finite type over $\kk$ 
(but not in general of finite type: depending on $b$, $X_x(b)$ may have infinitely many irreducible components). In the mixed characteristic case, the term ``variety'' is not really justified. More precisely one should speak about affine Deligne-Lusztig sets.

As experience and partial results show, many basic properties of affine Deligne-Lusztig varieties such as non-emptiness and dimension depend only on the underlying combinatorial structure of the (affine) root system, and therefore coincide in the mixed characteristic and equal characteristic cases.

For the remainder of the introduction, we fix a basic element $b \in G(L)$, i.e., an element whose Newton vector is central, or equivalently, whose $\sigma$-conjugacy class can be represented by a length zero element of the extended affine Weyl group. Compare \cite{kottwitz-isoI} and Section~\ref{sec:sigmaconj}.

So far, the main questions that have been studied are
\begin{enumerate}
\item
For which $x$ is $X_x(b)\ne\emptyset$?
\item
If $X_x(b)\ne\emptyset$ and $X_x(b)$ carries a scheme structure, what is $\dim X_x(b)$?
\end{enumerate}

Until recently, most of the results have been established only for split groups.
For tamely ramified quasi-split groups, we refer to \cite{He-GeometryOfADLV} Section 12 for question 2, at least in the equal-characteristic case.

In this paper, we focus on Question 1 above and give a complete answer to this question. 

We first show that it suffices to consider quasi-split, semisimple groups of adjoint type (see Sections \ref{sec:reduce-adjoint}, \ref{sec:reduce-quasisplit} for an explanation how to reduce to this case). For such groups, the answer is given in terms of the affine root system and the affine Weyl group of $G$ and uses the notion of $(J, w, \d)$-alcove (see Section~\ref{sec:Jwd}), a generalization of the notion of $P$-alcove introduced in \cite{GHKR2} for split groups.

The definition of a $(J, w, \d)$-alcove is a little bit technical, so we do not state it in this introduction. See Section~\ref{sec:Jwd} for the details. Roughly, two conditions must be met by an alcove $x\mathbf a$, $x\in \tW$, to be a $(J, w, \d)$-alcove: $x$ must satisfy a restriction on its finite part, and the alcove must lie in a certain region of the apartment, which is essentially a union of certain finite Weyl chambers. See \cite{GHKR2}~Section 3 for a visualization.

We denote by $\Gamma_F$ the absolute Galois group of $F$ and by $\kappa_G\colon G(L)\to \pi_1(G)_{\Gamma_F}$ the Kottwitz map; see \cite{kottwitz-isoI}, \cite{rapoport-richartz}, and Section~\ref{sec:PropertyStar}. Note that $\kappa_G$ also gives rise to maps with source $\tW$ and source $B(G)$. Likewise, for a Levi subgroup $M$, we denote by $\kappa_M$ the corresponding Kottwitz map. 

\begin{thrm1}[Corollary~\ref{ConsequBasic}, Theorem~\ref{thm-nonemptiness}]
Let $b\in G(L)$ be a basic element, and let $x\in\tW$. Then $X_x(b) = \emptyset$ if and only if there exists a pair $(J, w)$ such that $x \mathbf a$ is a $(J, w, \d)$-alcove and $$\kappa_{M_J}(w \i x \d(w))\not\in \kappa_{M_J}([b]\cap M_J(L)).$$  
\end{thrm1}

We say that the Dynkin diagram of $G$ is $\d$-connected if it can't be written as a union of two proper $\d$-stable subdiagrams that are not connected to each other.  We say that an element $w \in \tW$ lies in the shrunken Weyl chambers if $x \mathbf a$ does not lie in the same strip as the base alcove $\mathbf a$ with respect to any root direction (cf.~Prop.~\ref{reuman3}).

In this case, we have a more explicit description of the nonemptiness behavior of $X_x(b)$. The answer is given in terms of the map $\eta_\d$ from $\tW$ to the finite Weyl group $W$ defined in Section 3.5. 



\begin{thrm2}[Proposition~\ref{reuman3}, Proposition~\ref{reuman4}]
Assume that the Dynkin diagram of $G$ is $\d$-connected. Let $x\in\tW$ lie in the shrunken Weyl chambers. Let $b\in G(L)$ be a basic element. Then $X_x(b) \ne \emptyset$ if and only if $\kappa_G(x) = \kappa_G(b)$ and $\eta_\d(x) \in W - \bigcup_{J\subsetneq \SS, \d(J)=J} W_J$. 
\end{thrm2}

Let us give an overview of the paper. In Section~\ref{SEC:preliminaries} we collect some preliminaries and reduce to the case that $G$ is quasi-split and semisimple of adjoint type. In Section~\ref{SEC:emptiness} we prove, imitating the proof given in \cite{GHKR2} in the split case, the direction of Theorem A claiming emptiness. In the final Section~\ref{SEC:nonemptiness} we prove the non-emptiness statement of the theorem by employing the ``reduction method'' of Deligne and Lusztig. We show that the notion of $(J, w, \d)$-alcove is compatible with this reduction. Using some interesting combinatorial properties of affine Weyl groups established by the second-named and third-named authors~\cite{HN}, we are able to reduce the question to the case of $X_x(b)$, where $x$ is of minimal length in its $\d$-conjugacy class. This case can be handled directly using the explicit description of minimal length elements in \cite{HN}.

\emph{Acknowledgments.} We thank Timo Richarz for his help with the theory of Iwahori-Weyl groups for non-split groups.
We also thank Allen Moy and Xinwen Zhu for helpful discussions. 

\section{Preliminaries}
\label{SEC:preliminaries}

\subsection{Notation}\label{sec:notation}






Let $S\subset G$ be a maximal $L$-split torus defined over $F$. The centralizer $T$ of $S$ in $G$ is a maximal torus, because over $L$, $G$ is quasi-split. The Frobenius automorphism $\s$ of $L/F$ acts on the Iwahori-Weyl group
\[
\tW = N_S(L)/T(L)_1.
\]
Here $N_S$ denotes the normalizer of $S$ in $G$, and $T(L)_1$ denotes the unique parahoric subgroup of $T(L)$. For $w \in \tW$, we choose a representative in $N_S(L)$ and also write it as $w$. 

We denote by $\mathcal A$ the apartment of $G_L$ corresponding to $S$. 
We fix a $\sigma$-invariant alcove $\mathfrak a$ in $\ca$, and by $I\subseteq G(L)$ the Iwahori subgroup corresponding to $\mathfrak a$ over $L$.

\subsubsection{The affine Weyl group}

Denote by $G_1\subset G(L)$ the subgroup generated by all parahoric subgroups. We denote by
\[
W_a:=(N_S(L)\cap G_1)/(N_S(L)\cap I)
\]
the affine Weyl group.

The affine Weyl group acts simply transitively on the set of alcoves in $\mathcal A$, and our choice of base alcove gives rise to a length function and the Bruhat order on $W_a$. As usual, the length of an alcove is the number of ``affine root hyperplanes'' in the apartment separating the alcove from the base alcove.

\subsubsection{Semi-direct product representations of the Iwahori-Weyl group}

Denote by $\Gamma$ the absolute Galois group $\Gal(\overline{L}/L)$ of $L$. We can identify $\Gamma$ with the inertia subgroup of the absolute Galois group $\Gamma_F$ of $F$. By a subscript $\bullet{}_\Gamma$ we denote $\Gamma$-coinvariants.

Denote by $W = N_S(L)/T(L)$ the (relative, finite) Weyl group of $G$ with respect to $S$. 

We use the following important short exact sequences:
\begin{equation}\label{tW-first-sequence}
0 \to X_*(T)_\Gamma \to \tW \to W \to 1,
\end{equation}
where the map $\tW\to W$ is the natural projection. Its kernel is $T(L)/T(L)_1$ which can be identified with $X_*(T)_\Gamma$, see~\cite{pappas-rapoport:twisted}, Section 5. This short exact sequence splits, and we obtain $\tW = X_*(T)_\Gamma\rtimes W$. See \cite{haines-rapoport}, Proposition 13.

On the other hand, the affine Weyl group naturally embeds into $\tW$, and we have an exact sequence
\[
1 \to W_a \to \tW \to X^*(Z(\widehat{G})^\Gamma) \to 0.
\]
We can identify $X^*(Z(\widehat{G})^\Gamma)$ with the stabilizer of the base alcove $\mathfrak a$ in $\tW$. This shows that $\tW = W_a \rtimes X^*(Z(\widehat{G})^\Gamma)$. See \cite{haines-rapoport}, Lemma 14. Setting $\ell(x)=0$ for $x\in X^*(Z(\widehat{G})^\Gamma)$, we extend the length function to $\tW$.

At the same time, we can view $W_a$ as the Iwahori-Weyl group of the simply connected cover $G_{\rm sc}$ of the derived group $G_{\rm der}$ of $G$. Denoting by $T_{\rm sc}\subset G_{\rm sc}$ the maximal torus given by the choice of $T$, we obtain a semi-direct product decomposition
\[
W_a = X_*(T_{\rm sc})_\Gamma \rtimes W.
\]
We can identify $W_a$ with the group generated by the reflections with respect to the walls of $\mathfrak a$.

\subsubsection{Affine flag varieties}

The structure theory for $G(L)$ established by Bruhat and Tits gives the Iwahori-Bruhat decomposition
\[
G(L) = \bigcup_{w\in \tW} I w I,\qquad
G(L)/I = \bigcup_{w\in \tW} I w I/I,
\]
where both unions are disjoint.

\subsection{Reduction to adjoint groups}
\label{sec:reduce-adjoint}

Let $G$ be a connected semisimple group over $F$, and let $\Gad$ be the corresponding group of adjoint type, i.e., the quotient of $G$ by its center. The buildings of $G$ and $\Gad$ coincide, so that the choice of an alcove $\mathbf a$ in the building of $G$ determines an alcove, and hence an Iwahori group of $\Gad$. We first consider the more complicated case of equal characteristic.

Denote by $\Flag$ and $\Flag_{\rm ad}$ the corresponding affine flag varieties for $G$ and $\Gad$.

\begin{prop}
Assume that $\mathop{\rm char} \kk$ does not divide the order of $\pi_1(\Gad)$.
\begin{enumerate}
\item
The homomorphism $G\to \Gad$ induces an immersion
\[
\Flag \to \Flag_{\rm ad}.
\]
\item
Let $\l\in \pi_0(\Flag) = \pi_1(G)_\Gamma$, denote by $\l_{\rm ad}$ its image under the injective map $\pi_0(\Flag) \to \pi_0(\Flag_{\rm ad})$, and denote by $\Flag_\l$ and $\Flag_{{\rm ad}, \l_{\rm ad}}$ the corresponding connected components. Then the above immersion induces an isomorphism
\[
\Flag_\l \isomarrow \Flag_{{\rm ad}, \l_{\rm ad}}.
\]
\end{enumerate}
\end{prop}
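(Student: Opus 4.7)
The kernel of $G \to \Gad$ is the center $Z(G)$, a finite group scheme of multiplicative type whose order divides $|\pi_1(\Gad)|$: indeed, $Z(G)$ is a quotient of $Z(\Gsc)$, and $Z(\Gsc)$ is Cartier dual to $\pi_1(\Gad)$. So the characteristic hypothesis forces $Z(G)$ to be étale, which is what makes the induced map on loop groups $LG \to L\Gad$ a torsor for an étale sheaf and hence well-behaved at the ind-scheme level. This is the only real input from the hypothesis and will govern the whole argument.

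For part (1), I would exploit the Iwahori-Bruhat decompositions
\[
\Flag = \bigsqcup_{w\in\tW} IwI/I,\qquad
\Flag_{\rm ad} = \bigsqcup_{v\in\tW_{\rm ad}} I_{\rm ad}vI_{\rm ad}/I_{\rm ad},
\]
and verify three things. First, the natural map $\tW \to \tW_{\rm ad}$ is injective: using the semi-direct decompositions $\tW = X_*(T)_\Gamma \rtimes W$ and $\tW_{\rm ad} = X_*(T_{\rm ad})_\Gamma \rtimes W$, this reduces to injectivity of $X_*(T)_\Gamma \to X_*(T_{\rm ad})_\Gamma$, whose cokernel has order dividing $|\pi_1(\Gad)|$ and whose torsion kernel is controlled by the same hypothesis. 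Second, the Iwahori $I$ is precisely the preimage of $I_{\rm ad}$ along $LG \to L\Gad$, which follows from the functoriality of Bruhat-Tits group schemes for $G$ and $\Gad$. Third, for each $w \in \tW$ the Schubert cell $IwI/I$ maps isomorphically onto $I_{\rm ad}wI_{\rm ad}/I_{\rm ad}$: both are affine spaces of the same dimension $\ell(w)$, the length function depending only on the common affine root system, and the map is a bijection on $\kk$-points whose scheme-theoretic fibers are étale by the hypothesis. Passing to finite unions of cells gives a locally closed immersion at each Schubert-variety level, hence at the ind-scheme level.

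For part (2), I would identify the connected components via the exact sequences $\tW \twoheadrightarrow \pi_1(G)_\Gamma$ and $\tW_{\rm ad} \twoheadrightarrow \pi_1(\Gad)_\Gamma$, each with kernel $W_a$ (the common affine Weyl group, acting simply transitively on alcoves). The component $\Flag_\l$ consists exactly of the cells indexed by $w \in \tW$ with class $\l$, and under the injection $\tW \hookrightarrow \tW_{\rm ad}$ of the previous paragraph these are sent bijectively onto the fiber over $\l_{\rm ad}$ in $\tW_{\rm ad}$, i.e., onto the set of cells of $\Flag_{{\rm ad}, \l_{\rm ad}}$. Thus the immersion $\Flag_\l \to \Flag_{{\rm ad}, \l_{\rm ad}}$ from (1) is a bijection on cells and hence an isomorphism by the already-established cell-by-cell statement.

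The hardest step is the cell-by-cell isomorphism: one must verify it at the scheme level and not merely on $\kk$-points. The characteristic hypothesis enters precisely here, ruling out that $Z(G)$ contributes an infinitesimal part that would distinguish the two cells. In the mixed characteristic case only the underlying sets matter, and the corresponding statement follows formally from the injectivity of $\tW \to \tW_{\rm ad}$ and the compatibility of the Iwahori-Bruhat decompositions.
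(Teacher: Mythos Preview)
Your approach differs from the paper's. The paper's proof is very brief: it invokes \cite{pappas-rapoport:twisted}, \S 6.a, for the statement that the affine flag variety $\Flag_{\rm sc}$ of the simply connected cover $\Gsc$ embeds as an immersion onto the neutral connected component of both $\Flag$ and $\Flag_{\rm ad}$. For a non-neutral component $\Flag_\l$ one then picks a representative $g\in LG(\kk)$ of $\l$ and uses left translation by $g$ (resp.\ by its image in $L\Gad(\kk)$) to transport the isomorphism $\Flag_0 \cong \Flag_{{\rm ad},0}$ to $\Flag_\l \cong \Flag_{{\rm ad},\l_{\rm ad}}$. No cell-by-cell analysis is needed; the hard work is outsourced to Pappas--Rapoport, and the translation trick handles all components uniformly.

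Your Schubert-cell strategy is a legitimate and more self-contained alternative, but one step is not justified as written. You reduce injectivity of $\tW \to \tW_{\rm ad}$ to injectivity of $X_*(T)_\Gamma \to X_*(T_{\rm ad})_\Gamma$ and then claim its ``torsion kernel is controlled by the same hypothesis'', i.e.\ by the assumption $\mathop{\rm char}\kk \nmid |\pi_1(\Gad)|$. But the kernel of that map of $\Gamma$-coinvariants is the image of $H_1(\Gamma, X_*(T_{\rm ad})/X_*(T))$, a purely combinatorial invariant of the $\Gamma$-action on cocharacter lattices; it is entirely insensitive to $\mathop{\rm char}\kk$. So the characteristic hypothesis gives you nothing here, and injectivity of $\tW \to \tW_{\rm ad}$ (equivalently, of $\pi_1(G)_\Gamma \to \pi_1(\Gad)_\Gamma$) needs a separate argument---for instance by identifying the length-zero subgroup $\Omega\subset\tW$ with a group of symmetries of the base alcove, or by deducing it a posteriori from the immersion once that is established by other means. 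You are, however, correct that the characteristic hypothesis is what makes the individual Schubert-cell maps isomorphisms at the scheme level; that is exactly where it enters in \cite{pappas-rapoport:twisted} as well.
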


\begin{proof}
Denote by $\Gsc$ the simply connected cover of $G$, and by $\Flag_{\rm sc}$ its affine flag variety (attached to the Iwahori of $\Gsc$ given by $\mathbf a$). It is proved in \cite{pappas-rapoport:twisted} 6.a that there are natural maps
\[
\Flag_{\rm sc} \to \Flag \to \Flag_{\rm ad}
\]
and that
\[
\Flag_{\rm sc} \to \Flag,\quad\text{and}\quad\Flag_{\rm sc} \to \Flag_{\rm ad}
\]
are immersions which identify $\Flag_{\rm sc}$ with the neutral connected component of $\Flag$ and of $\Flag_{\rm ad}$.
Now let $\l\in \pi_0(\Flag) = \pi_1(G)_\Gamma$ (cf. \cite{pappas-rapoport:twisted} Theorem 5.1). Since $\pi_0(\Flag) = \pi_0(LG)$, we can find a representative $g\in LG(\kk)$ of $\lambda$. Left multiplication identifies the neutral connected component $\Flag_0$ with $\Flag_\l$, and likewise the image of $g$ in $L\Gad(\kk)$ identifies $\Flag_{{\rm ad}, 0}$ with $\Flag_{{\rm ad}, \l_{\rm ad}}$. This proves the proposition.
\end{proof}

Choosing maximal tori in $G$ and $\Gad$ compatibly, we obtain a map $x\mapsto x_{\rm ad}$ between the corresponding extended affine Weyl groups. For $b\in G(L)$, we denote by $b_{\rm ad}$ its image in $\Gad(L)$. Finally, for an affine Deligne-Lusztig variety $X_x(b)$ and $\l\in \pi_0(\Flag)$, we denote by $X_x(b)_\l$ the intersection $X_x(b)\cap \Flag_\l$, and likewise for $\Gad$.

In the mixed characteristic case, an analogous set-theoretic statement is true without any assumption on the order of $\pi_1(\Gad)$. The notion of \emph{connected component} should be replaced by \emph{fiber of the Kott\-witz homomorphism $G(L) \to \pi_1(G)_\Gamma$}. This can be shown along the same lines as above.

The proposition immediately implies the following corollary. Compare the discussion before Prop.~5.9.2 in \cite{GHKR1} for an analogous statement for split groups and affine Grassmannians.

\begin{cor}
\begin{enumerate}
\item
(Equal characteristic case)
Assume that $\mathop{\rm char} \kk$ does not divide the order of $\pi_1(\Gad)$.
Let $b\in G(L)$, $x\in \tW$, and $\l\in \pi_0(\Flag)$. Then the isomorphism $\Flag_\l \isomarrow \Flag_{{\rm ad}, \l_{\rm ad}}$ induces an isomorphism
\[
X_x(b)_\l = X_{x_{\rm ad}}(b_{\rm ad})_{\l_{\rm ad}}.
\]
\item
(Mixed characteristic case)
Let $b\in G(L)$, $x\in \tW$, and $\l\in \pi_0(\Flag)$. We have a bijection
\[
X_x(b)_\l = X_{x_{\rm ad}}(b_{\rm ad})_{\l_{\rm ad}}.
\]
\end{enumerate}
\end{cor}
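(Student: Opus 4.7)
The plan is to restrict the isomorphism $\pi\colon \Flag_\l \isomarrow \Flag_{{\rm ad},\l_{\rm ad}}$ supplied by the preceding proposition to the affine Deligne-Lusztig loci, and verify that it identifies $X_x(b)_\l$ with $X_{x_{\rm ad}}(b_{\rm ad})_{\l_{\rm ad}}$. The inclusion $\pi(X_x(b)_\l) \subseteq X_{x_{\rm ad}}(b_{\rm ad})_{\l_{\rm ad}}$ is immediate from functoriality of the Iwahori-Bruhat decomposition under $G(L)\to\Gad(L)$: if $g^{-1}b\sigma(g)\in IxI$, its image lies in $I_{\rm ad} x_{\rm ad} I_{\rm ad}$. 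The substance is the reverse inclusion.

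The reverse inclusion rests on a combinatorial fact that I would isolate first: the homomorphism
\[
\tW \longrightarrow \tW_{\rm ad} \times \pi_1(G)_\Gamma
\]
is injective. Indeed, its kernel lies in $W_a$ (the kernel of the projection $\tW \to \pi_1(G)_\Gamma$, via the splitting $\tW = W_a \rtimes X^*(Z(\widehat G)^\Gamma)$), and $W_a$, being the Iwahori-Weyl group of the common simply connected cover $\Gsc$, embeds identically into $\tW_{\rm ad}$ under $\tW \to \tW_{\rm ad}$. Hence any $w\in\tW$ is determined by the pair $(w_{\rm ad}, \kappa_G(w))$.

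Given this, take $g_{\rm ad} I_{\rm ad} \in X_{x_{\rm ad}}(b_{\rm ad})_{\l_{\rm ad}}$, let $gI \in \Flag_\l$ be its unique preimage under $\pi$, and write $g^{-1}b\sigma(g) \in IwI$. Functoriality gives $w_{\rm ad} = x_{\rm ad}$, and the Kottwitz computation
\[
\kappa_G(w) = -\kappa_G(g)+\kappa_G(b)+\sigma(\kappa_G(g)) = \kappa_G(b)+(\sigma-1)\l
\]
in $\pi_1(G)_\Gamma$ pins $\kappa_G(w)$ down purely in terms of $b$ and $\l$. The analogous identity $\kappa_{\Gad}(x_{\rm ad}) = \kappa_{\Gad}(b_{\rm ad}) + (\sigma-1)\l_{\rm ad}$, read off from the non-emptiness of $X_{x_{\rm ad}}(b_{\rm ad})_{\l_{\rm ad}}$, lifts uniquely along the injective map $\pi_1(G)_\Gamma \hookrightarrow \pi_1(\Gad)_\Gamma$ furnished by the preceding proposition to give $\kappa_G(x) = \kappa_G(b) + (\sigma-1)\l$. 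Thus $\kappa_G(w) = \kappa_G(x)$ and the joint injectivity above forces $w = x$, so $gI \in X_x(b)_\l$.

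The main obstacle is organizing the Kottwitz bookkeeping in this last step so that the $\kappa$-values of $x$ and $w$ are forced to agree in $\pi_1(G)_\Gamma$ and not merely after projection to $\pi_1(\Gad)_\Gamma$. This is precisely what the injectivity of $\pi_0(\Flag) \to \pi_0(\Flag_{\rm ad})$ from the preceding proposition delivers. The mixed characteristic case is handled by reading exactly the same argument set-theoretically over $G(L)/I$ and $\Gad(L)/I_{\rm ad}$, with fibers of the Kottwitz homomorphism replacing connected components, which explains why no hypothesis on $\mathop{\rm char}\kk$ is needed for the bijection statement (2).
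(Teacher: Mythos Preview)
Your argument is correct and supplies the details the paper leaves out: the paper simply asserts that ``the proposition immediately implies the following corollary'' and gives no further proof. Your approach---checking that $w_{\rm ad}=x_{\rm ad}$ by functoriality, and then that $\kappa_G(w)=\kappa_G(x)$ via the injectivity of $\pi_1(G)_\Gamma\hookrightarrow\pi_1(\Gad)_\Gamma$, so that the injectivity of $\tW\to\tW_{\rm ad}\times\pi_1(G)_\Gamma$ forces $w=x$---is exactly the kind of verification one would expect to underlie that word ``immediately.''

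One minor simplification: in your last paragraph you take a small detour through the formula $\kappa_G(b)+(\sigma-1)\l$ to deduce $\kappa_G(x)=\kappa_G(w)$. This is unnecessary. Once you know $w_{\rm ad}=x_{\rm ad}$, both $\kappa_G(w)$ and $\kappa_G(x)$ map to $\kappa_{\Gad}(x_{\rm ad})$ under the injection $\pi_1(G)_\Gamma\hookrightarrow\pi_1(\Gad)_\Gamma$, hence coincide directly. The computation of $\kappa_G(w)$ in terms of $b$ and $\l$ is correct but not needed for the conclusion.
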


\subsection{Reduction to the quasi-split case}\label{quasi-split}
\label{sec:reduce-quasisplit}

Let $H$ be a connected semisimple group over $F$ of adjoint type and $G$ be its quasi-split inner form. As before, we denote by $\tW$ the Iwahori-Weyl group of $G$ (over $L$). The inner forms of $G$ are parameterized by the Galois cohomology group $H^1(F, G)$. By \cite{rapoport-richartz} Theorem 1.15 we have a bijection
\begin{equation}\label{sequence-basic}
H^1(F, G) =\pi_1(G)_{\Gamma_F}. 
\end{equation} 
Via the map $X^*(Z(\widehat{G}))_\Gamma \cong \pi_1(G)_\Gamma \to \pi_1(G)_{\Gamma_F}$, we may associate to $H$ some length zero element $z \in \tW$. Since  by Steinberg's theorem $H\otimes_FL$ is quasi-split, we can identify $H(L)=G(L)$, and tracing through the above identifications shows that the Frobenius action induced by $H$ on $H(L)=G(L)$ is $\sigma_H = \mathop{\rm Int}(\gamma) \circ \sigma_G$; here $\gamma\in N_S(L)\subset G(L)$ is a lift of $z$ and $\mathop{\rm Int}(\gamma)$ denotes conjugation by $\gamma$.
In fact, Steinberg's theorem also applies over the maximal unramified extension $F^{nr}$ of $F$, so we can identify $H(F^{\rm nr}) = G(F^{\rm nr})$. Since conjugation by $\gamma$ preserves $S(F^{nr})$ and $T(F^{nr})$, we see that $S$ and $T$ descend to tori $S_H$, $T_H\subset H$ (over $F$). The Iwahori $I\subset G(L)$ for $G$ is also an Iwahori subgroup for $H$.

We can naturally identify $\tW$ with the Iwahori-Weyl group of $H$. This identification preserves the Coxeter structure (affine simple reflections, length, Bruhat order). Of course, the actions of $\sigma_G$ and $\sigma_H$ on $\tW$ will usually be different. Also note that while $\sigma_H$ acts on $W$, the splitting of the sequence~\eqref{tW-first-sequence} is not necessarily preserved by $\sigma_H$: Typically the set of finite simple reflections (for $G$) inside $\tW$ is not stable under  $\sigma_H$. This just reflects the fact that for non-quasi-split $H$, there is no Borel subgroup over $F$.

\subsection{$\s$-conjugacy classes}
\label{sec:sigmaconj}

We keep the notation of Section \ref{quasi-split} and draw some conclusions from results of Kottwitz \cite{kottwitz-isoI}, \cite{kottwitz-isoII} and of Rapoport and Richartz \cite{rapoport-richartz} about the classification of $\sigma$-conjugacy classes.

Denote by $B(H)$ and $B(G)$ the sets of $\sigma$-conjugacy classes in $H(L)$ (with respect to $\sigma_H$) and $G(L)$ (with respect to $\sigma_G$), respectively. The map $[b] \mapsto [b\gamma]$ is a bijection $B(H) \isomarrow B(G)$. This is the map considered by Kottwitz in \cite{kottwitz-isoII}, 4.18. We obtain the following commutative diagram
\[
\xymatrix{
\tW \ar[rr]^{x\mapsto x\gamma} \ar[d] & & \tW \ar[d] \\
B(H)\ar[rr]^{[b]\mapsto [b\gamma]} & & B(G),
}
\]
where the vertical arrows arise from the natural maps $N_S(L)\rightarrow B(H)$ and $N_S(L)\rightarrow B(G)$, respectively. Note that the map in the top row clearly preserves the length. In particular the set of length zero elements is preserved, and so the map in the bottom row maps basic elements for $H$ to basic elements for $G$.

\begin{prop}\label{s-conj}
Let $H/F$ be a connected semisimple algebraic group of adjoint type, and denote by $\tW$ its Iwahori-Weyl group (over $L$). Then the natural map $\tW\rightarrow B(H)$ is surjective.
\end{prop}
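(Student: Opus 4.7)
First I would use the commutative diagram displayed just before the proposition to reduce to the quasi-split case. Since both horizontal arrows (the map $x\mapsto x\gamma$ on $\tW$ and the bijection $[b]\mapsto [b\gamma]$ from $B(H)$ to $B(G)$) are bijections, surjectivity of $\tW\to B(H)$ is equivalent to surjectivity of $\tW\to B(G)$, where $G$ is the quasi-split inner form of $H$ and the map on the right uses the Frobenius $\sigma_G$. So it suffices to prove the statement when $H=G$ is quasi-split semisimple of adjoint type.

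Next I would apply Kottwitz's decomposition of a $\sigma$-conjugacy class via the Newton point. Given $[b]\in B(G)$, let $\bar\nu_b$ denote its dominant Newton vector, which is $\sigma_G$-invariant. Because $G$ is quasi-split, the centralizer $M=M_{\bar\nu_b}\subset G$ is a semi-standard Levi defined over $F$ (containing the torus $T$), and by \cite{kottwitz-isoII} there exists a representative $b_0\in M(L)$ of $[b]$ whose class $[b_0]_M\in B(M)$ is basic. Since $b_0\in M(L)\subset G(L)$, it is enough to show that $[b_0]_M$ admits a representative in $N_{S}(L)\cap M(L)$, which then gives the desired element of $\tW_M\subset\tW$.

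Because $M$ is itself quasi-split (inheriting a Borel over $F$ from $G$) and connected reductive, the classification of basic $\sigma$-conjugacy classes in $B(M)$ by $\pi_1(M)_{\Gamma_F}$ via the Kottwitz map, together with the fact that $\tW_M/W_{a,M}\cong X^*(Z(\widehat M)^\Gamma)$ can be identified with the length-zero elements $\Omega_M\subset\tW_M$, shows that the composition $\Omega_M\hookrightarrow \tW_M\to B(M)^{\rm basic}$ is surjective (this is what one needs from Haines-Rapoport \cite{haines-rapoport} plus a short computation). Lifting a preimage $\tau\in\Omega_M$ to $N_S(L)\cap M(L)$ then yields an element of $\tW$ mapping to $[b]\in B(G)$.

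The main obstacles are administrative rather than conceptual: one must (i) check that the map $\tW\to B(G)$ is well defined, i.e.~independent of the choice of lift in $N_S(L)$ modulo $T(L)_1$, which follows from Lang's theorem applied to $T(L)_1$ under the twisted Frobenius; and (ii) verify that for the quasi-split Levi $M$ the Kottwitz map really does induce a surjection $\Omega_M\twoheadrightarrow \pi_1(M)_{\Gamma_F}$, which is a direct unwinding of the definitions in the split case and then accounting for $\sigma$-coinvariants in the quasi-split case. Once these are in place, the Newton-point-and-Levi strategy packages the proposition cleanly.
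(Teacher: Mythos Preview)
Your proposal is correct and follows essentially the same route as the paper: reduce to the quasi-split inner form via the commutative square, then use Kottwitz's result that any class has a basic representative in a standard Levi $M$, and finally hit basic classes in $B(M)$ by length-zero elements of $\tW_M$ via the identification $\Omega_M\cong\pi_1(M)_\Gamma\twoheadrightarrow\pi_1(M)_{\Gamma_F}\cong B(M)^{\rm basic}$. The only cosmetic difference is that the paper phrases the basic case for the adjoint group $G$ through $H^1(F,G)\cong\pi_1(G)_{\Gamma_F}$ before invoking ``the above'' for the Levi, whereas you argue directly with $\Omega_M$ and $\pi_1(M)_{\Gamma_F}$, which is arguably cleaner since $M$ need not be adjoint.
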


\begin{rem}
For a different proof, see~\cite{He-GeometryOfADLV}, Theorem 3.5.
\end{rem}

\begin{proof}
By the above discussion it is enough to show the proposition for the quasi-split inner form $G$ of $H$. For a quasi-split semisimple group $G$ we can prove the statement along the same lines as Corollary 7.2.2 in \cite{GHKR2}, as follows:

Recall that a $\sigma$-conjugacy class is called basic, if its Newton vector is central, i.e., is contained in the image of the map $(X_*(Z(G))_{\mathbb Q}/     W)^{\Gamma_F}\rightarrow (X_*(T)_{\mathbb Q}/W)^{\Gamma_F}$. Let us first show that the set of length zero elements in $\tW$ maps surjectively to the subset of basic $\sigma$-conjugacy classes in $B(G)$.


This follows from the fact that the identification~\eqref{sequence-basic} is compatible with the natural maps to $B(G)$ and the fact that $H^1(F,G)$ maps bijectively to the set of basic $\sigma$-conjugacy classes. See e.g.~\cite{rapoport-richartz}, Theorem 1.15.

Now let $[b]\in B(G)$ be an arbitrary $\sigma$-conjugacy class. By the description in \cite{kottwitz-isoI} Proposition 6.2, we may assume that $b$ is contained in a Levi subgroup $M\subseteq G$ attached to a standard parabolic subgroup, and is basic for $M$. By the above, it follows that the $\sigma$-conjugacy class of $b$, viewed as an element of $B(M)$ is in the image of the Iwahori-Weyl group $\tW_M$ of $M$. Because the obvious diagram
\[
\xymatrix{
\tW_M \ar[r]\ar[d] & \tW\ar[d] \\
B(M)\ar[r] & B(G)
}
\]
is commutative, this proves the proposition.
\end{proof}

\subsection{Affine Deligne-Lusztig varieties}
We can also identify affine Deligne-Lusztig varieties for $G$ and $H$. Recall that the Iwahori $I\subset G(L)$ for $G$ is at the same time an Iwahori subgroup for $H$, so that we can identify the affine flag varieties for $G$ and for $H$. Furthermore $I$ is normalized by $\gamma$, because the length zero elements stabilize the base alcove. For any $x \in \tW$ and $b \in G(L)=H(L)$, the condition $g^{-1} b\sigma_H(g) \in IxI$
precisely amounts to $g^{-1}b\gamma\sigma_G(g)\gamma^{-1} \in IxI = Ix\gamma I\gamma^{-1}$. Thus

\begin{prop}
Let $G$, $H$ and $\gamma$ be as above. Let $x\in \tW$, and let $b\in G(L)=H(L)$. Then
\[
X^H_x(b) = X^G_{x\gamma}(b\gamma).
\]
\end{prop}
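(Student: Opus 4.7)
The plan is simply to unwind both defining conditions side-by-side and use the two structural facts already made available: the relation $\sigma_H(g) = \gamma\,\sigma_G(g)\,\gamma^{-1}$ and the fact that $\gamma$ normalizes $I$ (being a lift of a length-zero element of $\tW$, it stabilizes the base alcove).

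More precisely, I would fix $gI \in G(L)/I$ and rewrite the membership condition for $X^H_x(b)$. By definition, $gI \in X^H_x(b)$ iff
\[
g^{-1} b\, \sigma_H(g) \in IxI.
\]
Substituting $\sigma_H(g) = \gamma\,\sigma_G(g)\,\gamma^{-1}$ turns this into
\[
g^{-1}\, b\gamma\, \sigma_G(g)\, \gamma^{-1} \in IxI,
\]
which is equivalent to
\[
g^{-1}\, (b\gamma)\, \sigma_G(g) \in IxI\cdot \gamma.
\]
Since $\gamma$ normalizes $I$, we have $IxI\cdot\gamma = Ix\gamma I$, so the condition becomes $g^{-1}(b\gamma)\sigma_G(g) \in Ix\gamma I$, which is exactly the defining condition for $gI \in X^G_{x\gamma}(b\gamma)$. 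All steps are reversible, yielding the asserted equality of subsets of $G(L)/I = H(L)/I$.

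There is no real obstacle here; the content of the proposition is essentially bookkeeping, and everything needed has been set up in Sections~\ref{sec:reduce-quasisplit} and~\ref{sec:sigmaconj}. The only two small points to verify carefully are the normalization property $\gamma I \gamma^{-1} = I$ (so that $IxI\gamma = Ix\gamma I$) and the identity $\sigma_H = \mathrm{Int}(\gamma)\circ\sigma_G$ on $G(L) = H(L)$; both were recorded in Section~\ref{quasi-split}. Thus the proof reduces to a single line of manipulation.
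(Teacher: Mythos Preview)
Your proof is correct and is essentially identical to the argument the paper gives in the paragraph immediately preceding the proposition: both use $\sigma_H = \mathrm{Int}(\gamma)\circ\sigma_G$ and the fact that $\gamma$ normalizes $I$ to rewrite $g^{-1}b\sigma_H(g)\in IxI$ as $g^{-1}(b\gamma)\sigma_G(g)\in Ix\gamma I$.
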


\section{$P$-alcoves and emptiness of ADLV}
\label{SEC:emptiness}

In the rest of this paper, we let $G$ be a quasi-split connected semisimple group over $F$ that \emph{splits over a tamely ramified extension of $L$}. We simply write $\sigma$ for the Frobenius map $\sigma_G$ on $G(L)$ and write $\d$ for the induced automorphisms on $W$ and $\tW$.

\subsection{The root system}

Consider the real vector space $V=X_*(T)_\G \otimes \RR$. Let $\Phi$ be the set of (relative) roots of $G$ over $L$ with respect to $S$ and $\Phi_a$ the set of affine roots. The roots in $\Phi$ determine hyperplanes in $V$ and the relative Weyl group $W$ can be identified with the group generated by the reflections through these hyperplanes. 

Note that the root system $\Phi$ is not necessarily reduced. By \cite{Tits}, Section 1.7, there exists a unique reduced root system $\Sigma$ such that the affine roots $\Phi_a$ consists of functions on $V$ of the form $y \mapsto \a(y)+k$ for $\a \in \Sigma$ and $k \in \ZZ$. Moreover, $W=W(\Sigma)$ and $W_a=Q^\vee(\Sigma) \rtimes W$. Here $W(\Sigma)$ is the Weyl group of the root system $\Sigma$ and $Q^\vee(\Sigma)$ is the coroot lattice for $\Sigma$. 

Note that any root of $\Sigma$ is proportional to a root in $\Phi$. However, the root system $\Sigma$ is not necessarily proportional to $\Phi$, even if $\Phi$ is reduced. See \cite{Tits}, Section 1.7. 

Of course the length function and Bruhat order on $W_a$ produced in these two ways are the same, since in both cases they are given by the affine root hyperplanes in $V$, which are the same in both cases. The identification with the affine Weyl group of a reduced root system allows us to use the corresponding notions and results from the theory of root systems.

\subsection{Parabolic subgroup}
For $a \in \Phi$, we denote by $\mathbf U_a \subset G_L$ the corresponding root subgroup and for $\a \in \Phi_a$, we denote by $U_\a \subset L(G_L)$ the corresponding root subgroup scheme over $\kk$. They are described explicitly in \cite{pappas-rapoport:twisted}, Sections 9.a, 9.b. For the definition the assumption of loc.cit.~that $G$ be simply connected does not play a role. In particular, $U_\a$ is one-dimensional for all $\a \in \Phi_a$.

Our choice of fundamental alcove determines a basis $\mathbb S$ of $\Phi$. We choose the same normalization as in \cite{GHKR2}, which means that the fundamental alcove lies in the anti-dominant Weyl chamber.  We identify $\mathbb S$ with the set of simple reflections in $W$ and hence can also view $\mathbb S$ as a basis of the reduced root system $\Sigma$. Let $\Phi^+$ (resp. $\Phi^-$) be the set of positive (resp.~negative) roots of $\Phi$. For $J \subset \SS$, let $\Phi_J$ be the set of roots spanned by $J$ and let $\Phi^\pm_J=\Phi_J \cap \Phi^\pm$. Then $J$ is a basis of the subsystem $\Phi_J$. Let $W_J \subset W$ be the corresponding standard parabolic subgroup and $Q^\vee_J$ be the corresponding coroot lattice. 

We denote by $M_J$ the subgroup of $G$ generated by $T$ and $\mathbf U_a$ for $a \in \Phi_J$ and by $N_J$ the subgroups generated by $\mathbf U_a$ for $a \in \Phi^+-\Phi^+_J$. Let $P_J$ be the subgroup generated by $M_J$ and $N_J$. By \cite{Springer:LinearAlgGroups}, Section 15.4, $P_J=M_J N_J$ is a parabolic subgroup of $G$. If moreover $\d(J)=J$, then $P_J, M_J$ and $N_J$ are defined over $F$. The Iwahori-Weyl group of $M_J$ is $\tW_J=X_*(T)_\G \rtimes W_J$. We simply write $\k_J$ instead of $\k_{M_J}$.

\subsection{$(J, w, \d)$-alcoves}
\label{sec:Jwd}

As in \cite{GHKR2}, we use the notation ${}^x g:= xgx^{-1}$ and ${}^\sigma g :=\sigma(g)$ for $g\in G(L)$, and similarly for subsets of $G(L)$.

Let $J \subset \SS$ with $\d(J)=J$ and $w \in W$. Let $x\in\tW$.  We say $x{\bf a}$  is a \emph{$(J, w, \d)$-alcove}, if
\begin{enumerate}
\item[(1)] $w \i x \d(w) \in \tW_J$, and
\item[(2)] For any $a \in w(\Phi^+-\Phi^+_J)$, $\mathbf U_a \cap {}^x I \subseteq \mathbf U_a \cap I$, or equivalently, $\mathbf U_{-a} \cap {}^x I \supseteq \mathbf U_{-a} \cap I$.
\end{enumerate}
We say $x{\bf a}$ is a \emph{strict} $(J, w, \d)$-alcove if instead of (2) we have
\begin{enumerate}
\item[($3$)] For any $a \in w(\Phi^+-\Phi^+_J)$, $\mathbf U_a \cap {}^x I \subsetneqq \mathbf U_a \cap I$, or equivalently, $\mathbf U_{-a} \cap {}^x I \supsetneqq \mathbf U_{-a} \cap I$.
\end{enumerate}

In the split case, $x{\bf a}$ is a $(J, w, \d)$-alcove if and only if it is a ${}^w P_J$-alcove in the sense of \cite{GHKR2}. 

Condition (1) implies that ${}^{x \s} ({}^w M_J)={}^w M_J$. If we pass to the (non-connected) group $G \rtimes \langle\sigma\rangle$, then we can reformulate condition (1) above as $x\delta \in {}^w(\tW_J\rtimes\langle\delta\rangle)$.


Now we state the main result of this section, which generalizes \cite{GHKR2}, Theorem 2.1.2.

\begin{theorem} \label{mainthm}
Suppose $J \subset \SS$ with $\d(J)=J$ and $w \in W$, and $x\bf a$ is a $(J, w, \d)$-alcove. Set $I_M={}^w M_J \cap I$.
Then the map
$$
\phi: I \times^{I_M} I_M x \s(I_M) \rightarrow IxI
$$
induced by $(i,m) \mapsto im \sigma(i)^{-1}$, is surjective.  If $x{\bf a}$ is a strict $(J, w, \d)$-alcove, then $\phi$ is injective.  In general, $\phi$ is not injective, but if $[i,m]$ and $[i',m']$ belong to the same fiber of $\phi$, the elements $m$ and $m'$ are $\sigma$-conjugate by an element of $I_M$.
\end{theorem}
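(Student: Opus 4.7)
The plan is to extend the proof of Theorem~2.1.2 of~\cite{GHKR2} from the split setting to the twisted Frobenius setting, using the Iwahori decomposition adapted to the parabolic ${}^wP_J$. Write $M = {}^w M_J$, $N = {}^w N_J$, and $N^-$ for the opposite unipotent radical. The basic input is the Iwahori factorization $I = I^- \cdot I_M \cdot I^+$, where $I^\pm = I\cap N^\pm(L)$, obtained from an ordering of the affine roots of $I$ compatible with $P = {}^w P_J$; this is available over $L$ via the Bruhat--Tits structure theory recalled in~\cite{pappas-rapoport:twisted}. Condition~(1), in its reformulation $x\delta \in {}^w(\tilde W_J\rtimes\langle\delta\rangle)$, says that the twisted conjugation $y\mapsto x\sigma(y)x^{-1}$ preserves each of $M$, $N$ and $N^-$, so in particular $I_M x\sigma(I_M)$ is a well-defined subset of $Mx$.

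For surjectivity, the strategy is to show that every $g\in IxI$ can be $\sigma$-conjugated by some $i\in I$ into $I_M x\sigma(I_M)$, i.e.~$i^{-1}g\sigma(i)\in I_M x\sigma(I_M)$. Starting from $g = i_1 x i_2$ and applying the Iwahori factorization to $i_1, i_2$, one cleans up the contributions in $N$ and $N^-$ one affine root at a time. The core step is the following: for $a\in w(\Phi^+-\Phi^+_J)$, one wants to solve an equation of the form $u\cdot n = (x\sigma(u)x^{-1}) \cdot n'$ with $n\in \mathbf{U}_a\cap I$ given and $u\in\mathbf{U}_a\cap I$ sought. Condition~(2) is exactly what guarantees that the self-map $u\mapsto u^{-1}\cdot x\sigma(u)x^{-1}$ of $\mathbf{U}_a\cap I$ lands in $\mathbf{U}_a\cap I$, so that a successive-approximation argument inside the pro-$p$ filtration of $I$ solves for $u$. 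Iterating this over a convex order on the finite set of linear roots of $N$, and then on those of $N^-$, reduces $g$ to an element of $I_M x \sigma(I_M)$.

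For the fiber analysis, suppose $\phi(i,m) = \phi(i',m')$ and set $h = i'^{-1}i$, so that $hm\sigma(h)^{-1} = m'$. Decompose $h = h^- h_M h^+$. In the strict case, the self-maps $u\mapsto u^{-1}\cdot x\sigma(u)x^{-1}$ on $\mathbf{U}_a\cap I$ become \emph{injective} by condition~(3), and repeating the cleanup argument on the equation $hm\sigma(h)^{-1} = m'$ forces $h^\pm = 1$, hence $h\in I_M$; this gives injectivity of $\phi$. In the general case the same scheme shows that, after multiplying $(i,m)$ by a suitable element of $I_M$ (i.e.~moving within its $I_M$-orbit), one may assume $h^\pm = 1$, whence $m'$ and $m$ are $\sigma$-conjugate under $I_M$, as stated.

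The main obstacle is the cleanup step in the quasi-split setting. In~\cite{GHKR2} the affine root subgroups are $\mathbb{G}_a$'s and commute in a controlled way, so the successive-approximation is essentially a one-variable computation. In our situation the linear root subgroups $\mathbf U_a$ need not be additive (the paper's $\Phi$ may be non-reduced, so one must also track $\mathbf U_{2a}$), and although each affine $U_\alpha$ is one-dimensional by~\cite{pappas-rapoport:twisted}, one has to order affine roots carefully and control commutator relations under the twisted conjugation by $x\sigma$. Verifying that condition~(2), respectively~(3), translates into the existence, respectively uniqueness, of solutions for $u$ in each $\mathbf U_a\cap I$, and that the iterative process converges in the pro-$p$ topology of $I$, is the technical heart of the proof.
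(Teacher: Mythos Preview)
Your overall plan—adapt the argument of \cite{GHKR2}, Sections~4--6, using the Iwahori factorization adapted to ${}^wP_J$ together with a successive-approximation in a filtration of $I$—is exactly what the paper does. But your ``core step'' is mis-stated in a way that would break the argument.

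You write that for $a\in w(\Phi^+-\Phi^+_J)$ one solves $u\cdot n = (x\sigma(u)x^{-1})\cdot n'$ with $u\in \mathbf U_a\cap I$, and that $u\mapsto u^{-1}\, x\sigma(u)x^{-1}$ is a self-map of $\mathbf U_a\cap I$. This is not correct: if $u\in\mathbf U_a$ then $\sigma(u)\in\mathbf U_{\delta(a)}$ and $x\sigma(u)x^{-1}\in\mathbf U_{\bar x\delta(a)}$, where $\bar x\delta$ will in general permute the roots of $N$ nontrivially. Condition~(1) guarantees only that $x\sigma$-conjugation stabilises $N$, $M$, $\overline N$ as a whole; condition~(2) then gives ${}^{x\sigma}N_r\subseteq N_r$ for the congruence filtration, not any statement about a single $\mathbf U_a$. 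So neither the equation nor the ``self-map'' you describe lives where you claim, and a convex order on the linear roots does not help, since $\bar x\delta$ need not respect any such order.

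The fix, which is what the paper (following \cite{GHKR2}, Lemma~6.1.1) actually does, is to replace the root-by-root step by a filtration $N=N[1]\supset N[2]\supset\cdots$ by normal subgroups that are simultaneously stable under $M$-conjugation and under ${}^{x\sigma}(-)$, with abelian successive quotients. On each finite-dimensional vector group $N_r\langle i\rangle/N_{r^+}\langle i\rangle$ over $\kk$ the map $b\mapsto b\cdot{}^{mx\sigma}b^{-1}$ is then a Lang-type map and is surjective by \cite{GHKR2}, Lemma~5.1.1; iterating over $i$ and then over $r$ (via a generic Moy--Prasad filtration of $I$) gives the convergent product you need. The same correction applies to your fibre analysis: injectivity in the strict case comes from the strict inclusions ${}^{x\sigma}N_r\subsetneq N_r$ forcing the Lang-type map on each vector-group quotient of $N$ (not of a single $\mathbf U_a$) to be injective.
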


Similar to \cite{GHKR2}, Lemma 4.1.1, the theorem is equivalent to the following statement: the map
$$
\phi: \,\, ({}^{\d \i(x) \i} I \cap I) \times^{\, ^{\d \i(x) \i}I_M \cap I_M} I_M x \rightarrow I x
$$
given by $(i,m) \mapsto im\, \sigma (i)^{-1}$ is surjective.   It is bijective if $x{\bf a}$ is a strict $(J, w, \d)$-alcove.  In general, if $[i,xj]$ and $[i',xj']$ belong to the same fiber of $\phi$, then $xj$ and $xj'$ are $\sigma$-conjugate by an element of $^xI_M \cap I_M$.

The proof of the portion relating to the fiber of $\phi$ is just the same as in \cite{GHKR2}, Section 4. For the proof of surjectivity, we follow the strategy of \cite{GHKR2}, Section 6.

\subsection{} For $n \in \NN$, let $T(L)_n$ be the corresponding congruence subgroup of $T(L) \cap I$ (see \cite{prasad-raghunathan} 2.6). For any $r \geq 0$, let $I_r \subset I$ be the subgroup generated by $T(L)_n$ for $n \geq r$ and $U_{a+m}$ for $a \in \Phi$ and $m \geq r$ such that $a+m$ is a positive affine root. Let $I_{r^+}=\cup_{s>r} I_s$. Then $I_r$ and $I_{r^+}$ are normal subgroups of $I$ for all $r \geq 0$.

Recall that $x {\bf a}$ is a $(J, w, \d)$-alcove. Let $M={}^w M_J$. Let $N \subset G$ be the subgroup generated by $\mathbf U_a$ for $a \in w(\Phi^+-\Phi^+_J)$ and $\overline N \subset G$ be the subgroup generated by $\mathbf U_{-a}$ for $a \in w(\Phi^+-\Phi^+_J)$.

For $r \geq 0$, let $N_r=N(L) \cap I_r$ and $N_{r^+}=N(L) \cap I_{r^+}$. They are normal subgroups of $N(L) \cap I$. Similarly, let $\overline{N}_r=\overline{N}(L) \cap I_r$ and $\overline{N}_{r^+}=\overline{N}(L) \cap I_{r^+}$.

Since $x {\bf a}$ is a $(J, w, \d)$-alcove, we have ${}^{x \s} N_r \subseteq N_r$ and ${}^{x \s} \overline N_r \supseteq \overline N_r$.

\begin{lemma} \label{sided_approx_lemma}  Fix an element $m \in I_M$ and $r \geq 0$.
\begin{enumerate}
\item[(i)] Given $i_- \in \s(\overline{N}_r)$, there exists $b_- \in \overline{N}_r$ such that
$^{(m x)^{-1}}b_- i_- \, ^{\sigma}b_-^{-1} \in \s(\overline{N}_{r^+})$.
\item[(ii)] Given $i_+ \in N_r$, there exists $b_+ \in N_r$ such that
$b_+ i_+ \, ^{mx\sigma}b_+^{-1} \in N_{r^+}$.
\end{enumerate}
\end{lemma}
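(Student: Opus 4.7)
The plan is to follow the strategy of \cite{GHKR2}, Section~6, adapted to the quasi-split setting: reduce modulo $N_{r^+}$ (resp.\ $\overline N_{r^+}$) so that the multiplicative equation becomes additive and linear, then apply a Lang--Steinberg-type surjectivity result. Each of the quotients $N_r/N_{r^+}$ and $\overline N_r/\overline N_{r^+}$ is a connected commutative unipotent $\kk$-group---a finite direct sum of affine root subgroups $U_\a/U_{\a^+}\cong\mathbb G_a$ at ``level~$r$''. For part~(ii), I would set $F:=\mathop{\rm Ad}(mx)\circ\s$, so that ${}^{mx\s}b_+=F(b_+)$. Condition~(2) of the $(J,w,\d)$-alcove definition gives ${}^{x\s}N_r\subseteq N_r$, and since $m\in I_M$ normalizes each $N_r$, $F$ preserves $N_r$ and descends to an endomorphism of $A:=N_r/N_{r^+}$. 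The equation then becomes the linear equation $F(\bar b_+)-\bar b_+=\bar\imath_+$ in $A$; since $F$ is the composition of the algebraic automorphism $\mathop{\rm Ad}(mx)$ with the Frobenius~$\s$, it is a Steinberg endomorphism of the connected commutative $\kk$-group $A$, and the Lang--Steinberg theorem supplies the required preimage~$\bar b_+$. Any lift to $N_r$ solves~(ii).

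Part~(i) is dual. Writing $i_-=\s(j_-)$ with $j_-\in\overline N_r$ and applying $\s^{-1}$ to both sides reduces the equation to $\tau(b_-)\cdot j_-\cdot b_-^{-1}\in\overline N_{r^+}$, where $\tau(g):=\s^{-1}\bigl((mx)^{-1}g(mx)\bigr)$. The inclusion ${}^{x\s}\overline N_r\supseteq\overline N_r$ rewrites as ${}^{(x\s)^{-1}}\overline N_r\subseteq\overline N_r$, which (together with normalization by~$m$) gives $\tau(\overline N_r)\subseteq\overline N_r$, and passing to the abelianization one obtains a linear equation $(1-\bar\tau)(\bar b_-)=\bar\jmath_-$ in $A':=\overline N_r/\overline N_{r^+}$.

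The main obstacle will be proving surjectivity of $1-\bar\tau$ in part~(i), since $\bar\tau$ is not itself an algebraic endomorphism (it involves the inverse Frobenius). I expect to handle this either directly, by decomposing $A'$ into the affine-root summands cycled by $\bar\tau$ and observing that $1-\bar\tau$ becomes an Artin--Schreier-type map of the shape $u\mapsto u-c\,u^{1/q^k}$ on each cycle (surjective on $\kk=\overline{\mathbb F}_q$), or indirectly, by applying Lang--Steinberg to the honest Steinberg endomorphism $\bar\tau^{-1}=\mathop{\rm Ad}(mx)\,\s$ on a pro-algebraic enlargement of $\overline N_r$ containing $\bar\tau^{-1}(\overline N_r)$, and then invoking the contracting inclusion to ensure that the resulting solution already lies in~$\overline N_r$. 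All remaining steps---descent to the abelian quotient, verification that the relevant endomorphisms preserve the right subgroups, and the lift from the quotient to $N_r$ or $\overline N_r$---are routine.
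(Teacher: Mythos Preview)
Your overall strategy matches the paper's, but there is a genuine gap: the quotients $N_r/N_{r^+}$ and $\overline N_r/\overline N_{r^+}$ are \emph{not} in general commutative. At $r=0$ the quotient $N_0/N_{0^+}$ is the subgroup of the reductive quotient of $I$ generated by the root groups $U_a(\kk)$ for those $a\in w(\Phi^+-\Phi_J^+)$ with $a$ a positive affine root; for instance, with $J=\emptyset$ and $w=w_0$ this is all of $U^-(\kk)$, which is non-abelian as soon as the semisimple rank exceeds~$1$. Hence the passage to an additive equation, and the ``direct sum of affine root summands'' picture on which both your argument for~(ii) and your approach~1 for~(i) rest, are not available.

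The paper repairs this by introducing an auxiliary finite separating filtration $N=N[1]\supset N[2]\supset\cdots$ by normal subgroups stable under $M$-conjugation and under ${}^{x\sigma}$, with \emph{abelian} successive quotients $N\langle i\rangle$ (the filtration from \cite{GHKR2}, proof of Lemma~6.1.1), and similarly for $\overline N$. Intersecting with $N_r$ and passing to $N_r\langle i\rangle/N_{r^+}\langle i\rangle$ now yields genuine vector groups over $\kk$, on which the surjectivity of $b_+\mapsto b_+\,{}^{mx\sigma}b_+^{-1}$ and of $b_-\mapsto{}^{(mx)^{-1}}b_-\,{}^\sigma b_-^{-1}$ is supplied by \cite{GHKR2}, Lemma~5.1.1; the solution in $N_r$ (resp.\ $\overline N_r$) is then assembled by climbing through the filtration in~$i$. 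Your approach~1 for~(i) becomes exactly this once the filtration is inserted; without it, neither of your two routes for~(i) goes through, and your stated justification for~(ii) is incorrect even though its conclusion could be salvaged by a direct appeal to Lang--Steinberg on the (possibly non-commutative) connected group $N_r/N_{r^+}$.
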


\begin{proof}

To the Borel subgroup ${}^w P_\emptyset$ of $G$, we associate a finite separating
filtration by normal subgroups
$$
N_L = N[1] \supset N[2] \supset \cdots
$$
as in \cite{GHKR2}, proof of Lemma 6.1.1.

This filtration has the following properties:
\begin{enumerate}
\item
For each $i$, $N[i]\subset N_L$ is normal, and stable under conjugation with elements of $M$.
\item
For each $i$, ${}^{x\sigma}N[i] \subseteq N[i]$.
\item
For each $i$, the quotient $N\langle i \rangle := N[i]/N[i+1]$ is abelian.
\end{enumerate}
We define $N_r[i] := N_r \cap N[i]$, and $N_r\langle i \rangle := N_r[i]/N_r[i+1]$, and define $N_{r^+}\langle i\rangle$ analogously.  Then $N_r\langle i\rangle/N_{r^+}\langle i\rangle$ is a vector group over $\kk$. We define the groups
$\overline{N}[i]$, $\,\overline{N}\langle i \rangle$, $\,\overline{N}_r[i]$, $\overline{N}_{r}\< i\>$ and $\overline{N}_{r^+}\langle i \rangle$ in an analogous manner. It is easy to see from the definition that ${}^{(m x) \i} \overline{N}_r[i] \subset \s(\overline{N}_r[i])$ and $^{mx\sigma} N_r[i] \subset N_r[i]$.

By \cite{GHKR2}~Lemma 5.1.1, the map
$b_- \mapsto \, ^{(m x)^{-1}}b_- \, ^{\sigma}b_-^{-1}$ is surjective from the vector
group $\overline{N}_{r}\langle i \rangle/\overline{N}_{r^+}\langle i
\rangle$ to $\s(\overline{N}_{r}\langle i \rangle/\overline{N}_{r^+}\langle i
\rangle)$ and the map $b_+ \mapsto \, b_+ \, ^{mx\sigma}b_+^{-1}$ is surjective on each vector group $N_r\langle i \rangle/{N}_{r^+}\langle i
\rangle$.  Applying it repeatedly on these quotients in a suitable order, we may find
$b_- \in \overline{N}_r$ such that
$$
^{(xm)^{-1}}b_- i_- \, ^{\sigma}b_-^{-1} \in \overline{N}_{r^+},
$$
and $b_+ \in N_r$ such that
$b_+ i_+ \, ^{mx\sigma}b_+^{-1} \in N_{r^+}$.
\end{proof}

\begin{cor}
Let $m \in I_M$ and $r \geq 0$. Given $i_- \in \overline{N}_r$, there exists $b_- \in \s \i({}^{(m x) \i} \overline{N}_r)$ such that
$b_- i_- \, ^{m x \sigma}b_-^{-1} \in \overline{N}_{r^+}$.
\end{cor}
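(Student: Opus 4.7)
The plan is to deduce the corollary directly from part (i) of Lemma~\ref{sided_approx_lemma} by a change of variables. The corollary differs from the lemma in two purely cosmetic ways: the input is drawn from $\overline N_r$ rather than from $\s(\overline N_r)$, and one wants an element $b_-$ itself (with no $^{(mx)^{-1}}$) on the left, balanced on the right by ${}^{mx\s}b_-^{-1}$, rather than the pair ${}^{(mx)^{-1}} b_-$ and ${}^{\s} b_-^{-1}$ of the lemma. The whole point is to absorb an $(mx)^{-1}$-conjugation and an application of $\s^{-1}$ into the renaming of the variable.

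Given $i_- \in \overline N_r$, first apply part (i) of the lemma to $\s(i_-) \in \s(\overline N_r)$. This produces an element $b_-' \in \overline N_r$ with
\[
{}^{(mx)^{-1}} b_-' \cdot \s(i_-) \cdot \s(b_-')^{-1} \in \s(\overline N_{r^+}).
\]
Next, set $b_- := \s^{-1}\bigl({}^{(mx)^{-1}} b_-'\bigr)$. Since $b_-' \in \overline N_r$, this immediately yields the required membership $b_- \in \s^{-1}\bigl({}^{(mx)^{-1}}\overline N_r\bigr)$. Moreover, the defining equation $\s(b_-) = {}^{(mx)^{-1}} b_-'$ rearranges to $b_-' = (mx)\s(b_-)(mx)^{-1} = {}^{mx\s} b_-$.

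Finally, apply $\s^{-1}$ to the displayed inclusion. Using that $\s$ is a bijection on $G(L)$, so that $\s^{-1}\bigl(\s(\overline N_{r^+})\bigr) = \overline N_{r^+}$, this gives
\[
b_- \cdot i_- \cdot ({}^{mx\s}b_-)^{-1} \;=\; b_- \, i_- \, {}^{mx\s} b_-^{-1} \;\in\; \overline N_{r^+},
\]
which is exactly the statement of the corollary. There is no substantive obstacle: one only needs to keep the convention $^{mx\s}g = (mx)\s(g)(mx)^{-1}$ straight and not confuse it with $\s({}^{mx}g)$. In effect, the corollary is a cosmetic reformulation of Lemma~\ref{sided_approx_lemma}(i), and it is useful in the subsequent induction precisely because it presents the approximation in the form $b_-\,i_-\,{}^{mx\s}b_-^{-1}$ that matches the map $\phi$ in Theorem~\ref{mainthm}.
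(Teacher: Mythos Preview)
Your proof is correct and follows exactly the same approach as the paper's: apply Lemma~\ref{sided_approx_lemma}(i) to $\sigma(i_-)$, then set $b_- = \sigma^{-1}({}^{(mx)^{-1}} b_-')$ and apply $\sigma^{-1}$ to the resulting inclusion. The only difference is notation (the paper calls your $b_-'$ simply $b$) and that you spell out the verification of ${}^{mx\sigma}b_- = b_-'$ a bit more explicitly.
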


\begin{proof}
By Lemma \ref{sided_approx_lemma}, there exists $b \in \overline{N}_r$ such that $^{(m x)^{-1}}b \s(i_-) \, ^{\sigma}b^{-1} \in \s(\overline{N}_{r^+})$. Set $b_-=\s \i(^{(m x)^{-1}}b)$. Then $b_- i_- \,^{mx\sigma}b_-^{-1} \in \overline{N}_{r^+}$.
\end{proof}

As explained in \cite{GHKR2}, Section 6, a generic Moy-Prasad filtration gives a filtration $I=\cup_{r \ge 0} I[r]$ with $I[r] \supset I[s]$ for $r<s$ satisfying the following conditions:
\begin{enumerate}
\item Each $I[r]$ is normal in $I$.
\item Each $I[r]$ is a semidirect product $I\langle r \rangle I[r^+]$,
where $I\langle r \rangle$ is either an affine root subgroup (hence
one-dimensional over our ground field $k$) or else contained in $T(\mathfrak o)$.
\end{enumerate}

Let $ y \in I x$. By the same argument as in \cite{GHKR2}, Section 6, for any $i \ge 0$, there exists $h_i \in {}^{\d \i(x) \i} I \cap I$ (suitably small when $i$ is large) such that $$h_i h_{i-1} \cdots h_0 y \s(h_i h_{i-1} \cdots h_0) \i \in I[i^+] I_M x.$$ Let  $g= \cdots h^{(2)}h^{(1)}h^{(0)}$ be the convergent product. Then $g y \s(g) \i  \in x I_M$. This proves the surjectivity.
\

By the same argument as in \cite{GHKR2}, Section 6, we also have the following result.

\begin{prop}
Suppose $J \subset \SS$ with $\d(J)=J$ and $w \in W$, and $x\bf a$ is a $(J, w, \d)$-alcove. Set $I_M={}^w M_J \cap I$. If moreover, $^{x \s} I_M=I_M$, then we may $\s$-conjugate any element of $I x$ to $x$, using an element of ${}^{\d \i(x) \i} I \cap I$.
\end{prop}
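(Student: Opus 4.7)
The plan is to combine the surjectivity of $\phi$ from Theorem~\ref{mainthm} (in its reformulated form stated just after it) with a Lang--Steinberg-type argument inside $I_M$. First, applying that reformulation to $y \in Ix$ produces an element $i \in {}^{\d\i(x)\i}I \cap I$ together with $j \in I_M$ satisfying $i\i y \s(i) = jx$. Note that $I_M \subseteq {}^{\d\i(x)\i}I \cap I$: the inclusion $I_M \subseteq I$ is by definition, while the hypothesis $x\s(I_M)x\i = I_M \subseteq I$ yields $\s(I_M) \subseteq x\i I x$, whence $I_M \subseteq \s\i(x\i)\, I\, \s\i(x) = {}^{\d\i(x)\i}I$. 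So the task reduces to $\s$-conjugating $jx$ to $x$ by an element of $I_M$.

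Setting $F := {}^{x\s}\colon I_M \to I_M$, which is a group automorphism by the standing hypothesis, the equation $g\,(jx)\,\s(g)\i = x$ is equivalent to $j = g\i F(g)$. I would solve this Lang--Steinberg-type equation in $I_M$ by the same convergent-product strategy used in Section~6 of \cite{GHKR2}. Equip $I_M$ with a generic Moy--Prasad filtration $I_M = I_M[0] \supset I_M[1] \supset \cdots$ coming from the apartment of ${}^w M_J$. Since $F$ stabilizes $I_M$ and permutes the affine root subgroups of ${}^w M_J$, the filtration can be arranged to be $F$-stable, with each successive quotient $I_M[r]/I_M[r^+]$ either a one-dimensional affine root subgroup of ${}^w M_J$ or a piece contained in $T(\mathfrak o)$, hence a vector group over $\kk$ (up to a torus factor).

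On each such quotient, Lemma~5.1.1 of \cite{GHKR2} provides surjectivity of $g \mapsto g\i F(g)$, the essential input being the Frobenius component of $F$. Inductively one then constructs $g_r \in I_M[r]$ so that the partial product $g_r g_{r-1} \cdots g_0$ solves the equation modulo $I_M[r^+]$, and the resulting convergent infinite product $g := \cdots g_2 g_1 g_0 \in I_M$ satisfies $g\i F(g) = j$. Finally, $h := i g\i$ lies in ${}^{\d\i(x)\i}I \cap I$ (both factors do, and this set is a group) and satisfies $h\i y \s(h) = g \cdot (i\i y \s(i)) \cdot \s(g)\i = g\,(jx)\,\s(g)\i = x$, which is the desired conclusion. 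The main obstacle I anticipate is the bookkeeping needed to produce an $F$-stable Moy--Prasad filtration of $I_M$ whose successive quotients feed directly into Lemma~5.1.1 of \cite{GHKR2}; with this in place, the convergent-product argument is completely parallel to the surjectivity proof of Theorem~\ref{mainthm} already carried out above.
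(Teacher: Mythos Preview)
Your proposal is correct and is essentially the same argument the paper invokes (namely, the convergent-product method of \cite{GHKR2}, Section~6), just factored into two stages: you first apply the surjectivity of $\phi$ as a black box to land in $I_M x$, and then run a separate Lang--Steinberg argument inside $I_M$. The paper's route (following \cite{GHKR2}) instead runs one pass through a generic Moy--Prasad filtration of $I$: at each step the graded piece $I\langle r\rangle$ lies either in $N$, in $\overline N$, or in $M$, and now---thanks to ${}^{x\s}I_M=I_M$---the $M$-pieces can be killed by Lemma~5.1.1 of \cite{GHKR2} just as the $N$- and $\overline N$-pieces are, rather than being absorbed into $I_M x$. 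Your two-step packaging is arguably cleaner since it reuses the theorem already proved; the one-step version avoids setting up a second filtration.

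One remark on your anticipated obstacle: you do not actually need an $F$-stable filtration of $I_M$. Just as in the surjectivity proof of Theorem~\ref{mainthm}, the generic Moy--Prasad filtration is \emph{not} assumed to be ${}^{x\s}$-stable; what matters is only that each correction step produces a remainder lying strictly deeper in the filtration, so that the successive approximations $g_r g_{r-1}\cdots g_0$ converge. Concretely, you may take the filtration on $I_M$ induced by the filtration $I[r]$ on $I$, and at each step apply Lemma~5.1.1 of \cite{GHKR2} to the resulting one-dimensional (or torus) piece; the Frobenius factor in $F={}^{x\s}$ supplies exactly the input that lemma needs.
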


\subsection{Some properties on Newton vectors}\label{newton}

Let $n$ be the order of $W \rtimes \<\d\>$ (we consider $\d$ as an element of the automorphism group of $W$). For any $\l \in X_*(T)_\G$, set $\nu_\l=\sum_{i=0}^{n-1} \d^i(\l)/n \in V$. For $x \in \tW$, $x \d(x) \cdots \d^{n-1}(x)=\e^{\mu}$ for some $\mu \in X_*(T)^\d_\G$. We set $\nu_x=\mu/n \in X_*(T)^\d_\G \otimes \QQ$. It is easy to see that $\nu_\l=\nu_{\e^\l}$ for any $\l \in X_*(T)_\G$. 

For any $b \in G(L)$, let $\bar \nu_b$ be the  (dominant) Newton vector of $b$. If $x \in \tW$ and $\dot{x}\in N_S(L)\subseteq G(L)$ is a representative of $x$, then $\bar \nu_{\dot{x}}$ is the unique dominant element in the $W$-orbit of $\nu_x$. 

The following properties are easy to verify and we omit the details. 

(1) Let $J \subset \SS$ with $\d(J)=J$ and $x=\e^\l w \in \tW_J$. Then $\nu_x-\nu_\l \in Q^\vee_J \otimes_\ZZ \QQ$. 


(2) Assume that the Dynkin diagram of $G$ is $\d$-connected. Let $J \subsetneqq \SS$ with $\d(J)=J$. If $\l, \l' \in V$ such that $\<\l, \a\> \ge 0$ for all $\a \in J$, $\l'$ is central and $\l-\l' \in Q^\vee_J \otimes_\ZZ \QQ$, then $\l=\l'$. 

\

The following proposition says that $\s$-conjugacy classes never fuse.


\begin{prop}\label{fuse}
Let $[b]$ be a $\sigma$-conjugacy class in $G(L)$ and $J \subset \SS$ with $\d(J)=J$. Then $[b] \cap M_J(L)$ contains at most one $\s$-conjugacy class of $M_J(L)$.
\end{prop}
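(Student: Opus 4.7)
The plan is to prove the proposition by using Kottwitz's classification of $\sigma$-conjugacy classes in $M_J(L)$: two elements of $M_J(L)$ are $\sigma$-conjugate if and only if they have the same $M_J$-Newton vector and the same Kottwitz invariant $\kappa_{M_J}$. So given $m_1, m_2 \in [b]\cap M_J(L)$, it suffices to verify agreement of both invariants.

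First I would reduce the problem to comparing representatives living in the Iwahori-Weyl group $\tW_J$. By Proposition \ref{s-conj} applied to the standard Levi $M_J$ (which is quasi-split, since it is a standard Levi of the quasi-split group $G$), every $\sigma$-conjugacy class in $M_J(L)$ meets the image of $\tW_J$. Hence after $M_J$-$\sigma$-conjugation, I may assume $m_i = \dot{x}_i$ with $x_i = \e^{\l_i} w_i \in \tW_J$, and the question becomes whether $x_1$ and $x_2$ define the same class in $B(M_J)$.

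Next I would use the two properties recorded in Section \ref{newton}. By Property (1), $\nu_{x_i} - \nu_{\l_i} \in Q^\vee_J \otimes \Q$ for each $i$. Since $m_1$ and $m_2$ lie in the same $G$-$\sigma$-conjugacy class, their $G$-Newton vectors coincide as $W$-orbits; using the $M_J$-dominance of the $M_J$-Newton representatives, together with Property (2), I would deduce that the $M_J$-Newton vectors $\bar\nu^{M_J}_{m_1}$ and $\bar\nu^{M_J}_{m_2}$ are equal. This step is the crucial input and exploits both the $\d$-stability of $J$ and the relation between central elements and $Q^\vee_J$-cosets built into Property (2).

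Finally, the equality of $M_J$-Newton vectors together with the equality of $\kappa_G(m_1) = \kappa_G(m_2)$ (which is automatic because $m_1,m_2$ are $G$-$\sigma$-conjugate) pins down $\kappa_{M_J}(m_1) - \kappa_{M_J}(m_2)$ in $\pi_1(M_J)_{\Gamma_F}$: it lies in the kernel of $\pi_1(M_J)_{\Gamma_F}\to\pi_1(G)_{\Gamma_F}$ and has trivial image in $\pi_1(M_J)_{\Gamma_F}\otimes\Q$, hence is torsion. A direct analysis via the compatibility diagram of Kottwitz maps then shows that this element vanishes. Kottwitz's bijection then gives $m_1 \sim_\sigma m_2$ in $M_J(L)$.

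The main obstacle is the middle step, namely the Newton-vector comparison. In general, two elements of $M_J(L)$ that happen to be $G$-$\sigma$-conjugate need not have the same $M_J$-Newton vector, because the map $V/W_J \to V/W$ is typically far from injective. The structural rigidity provided by Property (2) of Section \ref{newton}, applied to the dominant $M_J$-Newton of one element and to the central (or Weyl-conjugated) Newton coming from $[b]$, is precisely what rules this obstruction out, and handling it cleanly will require care with the interplay between $\d$, $J$, and the affine root data.
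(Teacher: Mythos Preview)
Your plan has the right overall shape --- reduce to representatives in $\tW_J$ via Proposition~\ref{s-conj}, then verify that the Newton and Kottwitz invariants for $M_J$ agree --- and this matches the paper. But two of your steps do not work as written.

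First, Property~(2) of Section~\ref{newton} cannot be invoked in your Newton-comparison step: it requires the Dynkin diagram of $G$ to be $\d$-connected, $J\subsetneq\SS$, and one of the two vectors to be \emph{central}. None of these are hypotheses of Proposition~\ref{fuse}, and the paper does not use Property~(2) here at all. In the paper's argument the equality $\nu_x=\nu_{x'}$ is simply asserted ``by Kottwitz'' and then combined with Property~(1) to obtain $\nu_{\l'}-\nu_\l\in Q^\vee_J\otimes_\ZZ\QQ$.

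Second, and more importantly, your step~3 is exactly where the paper does the real work, and your sketch (``a direct analysis via the compatibility diagram of Kottwitz maps'') does not identify the needed ingredient. From $\kappa_G(x)=\kappa_G(x')$ the paper writes $\l'-\l=(\th-\d(\th))+r_J+r'_J$ with $r_J\in Q^\vee_J$ and $r'_J\in Q^\vee_{\SS\setminus J}$. The Newton input forces $\nu_{r'_J}=0$, i.e.\ $\sum_i \d^i(r'_J)=0$. The key observation is that $\d$ permutes the simple coroots in $\SS\setminus J$, so $Q^\vee_{\SS\setminus J}$ is a permutation lattice for $\langle\d\rangle$; an explicit telescoping on each $\d$-orbit of simple coroots then shows $r'_J=v-\d(v)$ for some $v$. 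This is what yields $\kappa_J(x)=\kappa_J(x')$. In your abstract language this amounts to the fact that the relevant kernel carries no nonzero torsion precisely because of this permutation structure; without naming it, your step~3 remains a gap.
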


\begin{proof}
By Proposition \ref{s-conj}, any $\s$-conjugacy class of $M_J(L)$ is represented by some element in $\tW_J$. Let $x=\e^\l w, x'=\e^{\l'} w' \in \tW_J$ such that $x$ and $x'$ are in the same $\s$-conjugacy class of $G(L)$. By Kottwitz \cite{kottwitz-isoI} and \cite{kottwitz-isoII},  $\nu_x=\nu_{x'}$ and $\k_G(x)=\k_G(x')$. We have $\l'=\l+\th-\d(\th)+r_J+r'_J$ for some coweight $\th$, $r_J \in Q^\vee_J$ and $r'_J \in Q^\vee_{\SS-J}$. 

By Section \ref{newton} (1), $\nu_{\l'}-\nu_\l \in Q^\vee_J \otimes_\ZZ \QQ$. Hence $$\nu_{r'_J} \in Q^\vee_J \otimes_\ZZ \QQ \cap Q^\vee_{\SS-J} \otimes_\ZZ \QQ=\{0\}.$$ 
In other words, $\sum_{i=0}^{n-1}\d^i(r'_J)=0$, where $n$ is the order of $W \rtimes \<\d\>$. 
Since $\d$ permutes simple coroots of $\SS-J$, we can assume, without loss of generality, that $r'_J=\sum_{j=0}^{s-1} b_j\d^j(\a^\vee)$, where $b_j \in \ZZ$, $\a^\vee$ is a simple coroot of $\SS-J$ and $s$ is the smallest positive integer with $\d^s(\a^\vee)=\a^\vee$. The equality $\sum_{i=0}^{n-1}\d^i(r'_J)=0$ is equivalent to $\sum_{j=0}^{s-1} b_j=0$. Let $c_j=\sum_{k=0}^j b_k$ and $v=\sum_{j=0}^{s-1}c_j \d^j(\a^\vee)$. Then $r'_J=v-\d(v)$. Hence $\l'-\l=\th'-\d(\th')+r_J$ for some coweight $\th'$. 

Therefore $\k_J(x)=\k_J(x')$. By \cite{kottwitz-isoII} 4.13, $x$ and $x'$ are in the same $\s$-conjugacy class of $M_J(L)$. 
\end{proof}

\subsection{Applications to affine Deligne-Lusztig varieties}

We consider the following maps from the Iwahori-Weyl group $\tW$ to the finite Weyl group $W$:
\begin{align*}
& \eta_1\colon \tW = X_*(T)_\Gamma \rtimes W \rightarrow W, \text{ the projection}\\
& \eta_2(x) \text{ is the unique element $v$ such that $v^{-1}x\in {^\SS}\tW$}\\
& \eta_\d(x) = \delta^{-1}(\eta_2(x)^{-1}\eta_1(x))\eta_2(x).
\end{align*}

Here ${^\SS}\tW$ is the set of $x \in \tW$ such that $x\mathbf a$ lies in the dominant chamber.
So if $x=v\e^\mu w$ with $\e^\mu w\mathbf a$ contained in the dominant chamber, $v, w\in W$, then $\eta_1(x)=vw$, $\eta_2(x)=v$, and $\eta_\d(x) = \delta^{-1}(w)v$.

Now we discuss some consequences on affine Deligne-Lusztig varieties. For analogues in the split case, see \cite{GHKR2} Section 9.

\begin{cor}\label{ConsequBasic}
Let $[b]$ be a basic $\sigma$-conjugacy class in $G(L)$. Suppose $J \subset \SS$ with $\d(J)=J$ and $w \in W$, and $x\bf a$ is a $(J, w, \d)$-alcove.  Then $X_x(b)=\emptyset$,
unless $\kappa_J(w \i x \d(w)) \in \kappa_J([b] \cap M_J(L))$.
\end{cor}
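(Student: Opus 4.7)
The plan is to use the surjectivity in Theorem \ref{mainthm} to transport any point of $X_x(b)$ into a translate of $M_J(L)$, and then read off the Kottwitz invariant.

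Supposing $X_x(b) \neq \emptyset$, I start with $g \in G(L)$ satisfying $g^{-1} b \s(g) \in IxI$. Theorem \ref{mainthm} then gives $g^{-1} b \s(g) = i m \s(i)^{-1}$ for some $i \in I$ and $m \in I_M x \s(I_M)$, where $I_M = {}^w M_J \cap I$; after absorbing $i$ into $g$, the equation becomes $g^{-1} b \s(g) = m$. The next step is to use condition (1) of the $(J, w, \d)$-alcove definition, $w^{-1} x \d(w) \in \tW_J$, together with $\s(I) = I$ and $\d(J) = J$, to see that $\s(I_M) = {}^{\d(w)} M_J \cap I$ and that $I_M \cdot x \cdot \s(I_M) \subseteq w \, M_J(L) \, \d(w)^{-1}$. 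Writing $m = w \, m' \, \d(w)^{-1}$ with $m' \in M_J(L)$ and $\s$-conjugating $g$ further by $w$ (using $\s(w) = \d(w)$) gives $(gw)^{-1} b \s(gw) = m'$, so $m' \in [b] \cap M_J(L)$; in particular $[b] \cap M_J(L) \neq \emptyset$.

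To finish, I would write $m = i_1 \, x \, i_2$ with $i_1 \in I_M$ and $i_2 \in \s(I_M)$, so that $m' = (w^{-1} i_1 w)(w^{-1} x \d(w))(\d(w)^{-1} i_2 \d(w))$. The two outer factors lie respectively in $M_J(L) \cap w^{-1} I w$ and $M_J(L) \cap \d(w)^{-1} I \d(w)$, each an Iwahori subgroup of $M_J(L)$, hence in $\ker \kappa_{M_J}$. Therefore $\kappa_J(m') = \kappa_J(w^{-1} x \d(w))$, and combined with $m' \in [b] \cap M_J(L)$ this yields $\kappa_J(w^{-1} x \d(w)) \in \kappa_J([b] \cap M_J(L))$, as required. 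The main subtlety I anticipate is the factorization $I_M \cdot x \cdot \s(I_M) \subseteq w \, M_J(L) \, \d(w)^{-1}$: once this is established from the $(J, w, \d)$-alcove condition and the identity $\s(w) = \d(w)$, the remaining Kottwitz computation is routine, since Iwahori subgroups of $M_J(L)$ are connected parahorics and thus killed by $\kappa_{M_J}$.
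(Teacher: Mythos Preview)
Your argument is correct and is exactly the intended one: the paper does not spell out a proof of this corollary, but simply records it as a consequence of Theorem~\ref{mainthm}, referring to \cite{GHKR2}~Section~9 for the split analogue; your write-up is the natural adaptation of that argument to the twisted setting.

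One small caveat: the identity ``$\s(w)=\d(w)$'' holds in $\tW$, not literally in $N_S(L)$; for the chosen representatives one only has $\s(w)\in\d(w)\,T(L)_1$. This does not disturb anything, since $T(L)_1\subset M_J(L)$ lies in a parahoric and hence is killed by $\kappa_J$, so the extra $T(L)_1$-factor is absorbed both in the inclusion $I_M\, x\,\s(I_M)\subseteq w\,M_J(L)\,\d(w)^{-1}$ and in the final Kottwitz computation.
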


\begin{rem}
By Proposition \ref{fuse}, $[b] \cap M_J(L)$ is empty or a single $\s$-conjugacy class of $M_J(L)$ and hence $\kappa_J([b] \cap M_J(L))$ consists of at most one element.
\end{rem}

\begin{lemma}\label{3.6.3}
Let $J \subset \SS$ with $\d(J)=J$. Let $x\in\tW$, and write $w = \eta_2(x)\in W$. If $\eta_\d(x)\in \tW_J$, then $x {\bf a}$ is a $(J, w, \d)$-alcove.
\end{lemma}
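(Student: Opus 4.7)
The plan is to put $x$ in a convenient normal form and then verify conditions (1) and (2) of a $(J,w,\d)$-alcove separately. Since $w=\eta_2(x)$, by definition $y:=w\i x\in{}^{\SS}\tW$, so the alcove $y\mathbf a$ lies in the dominant Weyl chamber. Writing $y=\e^\mu u$ with $u\in W$ and $\mu\in X_*(T)_\G$, the formulas of Section~3.5 give $\eta_1(x)=wu$ and $\eta_\d(x)=\d\i(u)w$. The hypothesis $\eta_\d(x)\in W_J$ therefore reads $\d\i(u)w\in W_J$; applying $\d$ and using $\d(J)=J$ gives the equivalent condition $u\,\d(w)\in W_J$. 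Then
\[
w\i x\,\d(w) \;=\; \e^\mu\,u\,\d(w) \;\in\; X_*(T)_\G\rtimes W_J \;=\; \tW_J,
\]
establishing Condition~(1).

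For Condition~(2), I would fix $\a\in\Phi^+-\Phi^+_J$ and set $a=w\a$. Since $x=wy$ and $w\mathbf U_\a w\i=\mathbf U_{w\a}$,
\[
\mathbf U_a\cap{}^xI \;=\; {}^w\bigl(\mathbf U_\a\cap{}^yI\bigr).
\]
The key observation, coming from the standard Bruhat--Tits description of Iwahori subgroups via the Moy--Prasad filtration on the one-dimensional groups $\mathbf U_\a(L)$, is that moving from $\mathbf a$ to $y\mathbf a$ in the dominant direction only shrinks the $\mathbf U_\a$-piece for each positive $\a$: explicitly, $\mathbf U_\a\cap{}^yI \subseteq \mathbf U_\a\cap I$. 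Conjugating by $w$ then gives $\mathbf U_a\cap{}^xI \subseteq {}^w(\mathbf U_\a\cap I) = \mathbf U_a\cap{}^wI$. Finally, since $w\in W$ involves no $\e$-translation, $\mathbf U_a\cap{}^wI$ retains the ``deep'' ($\e\mathcal O$-like) shape inherited from the positive root $\a$, and hence is automatically contained in $\mathbf U_a\cap I$: the two sides are equal when $a\in\Phi^+$ and the left side is strictly smaller when $a\in\Phi^-$. Composing the two inclusions yields Condition~(2).

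The only step beyond pure bookkeeping is the dominance comparison $\mathbf U_\a\cap{}^yI \subseteq \mathbf U_\a\cap I$ for $y\mathbf a$ in the dominant chamber and $\a\in\Phi^+$. Given the paper's convention of placing the fundamental alcove in the anti-dominant chamber, this is a direct computation with affine-root half-spaces and the associated filtration on $\mathbf U_\a(L)$, so I anticipate no serious technical obstruction; the remainder is a direct unpacking of the definitions of $\eta_2$ and $\eta_\d$.
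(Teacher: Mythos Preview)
Your proof is correct and follows essentially the same approach as the paper. The only difference is packaging: where you work one root $\a\in\Phi^+-\Phi^+_J$ at a time and finish with the case analysis $\mathbf U_a\cap{}^wI\subseteq\mathbf U_a\cap I$, the paper bundles all positive roots into the unipotent radical $U$ and writes the single chain $\mathbf U_\b\cap{}^xI\subseteq{}^wU\cap{}^xI\subseteq{}^w(U\cap I)\subseteq I$, so that your final step becomes the trivial inclusion $U\cap I\subseteq I$ conjugated by $w$.
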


\begin{proof} Let $U$ be the subgroup of $G$ generated by $\mathbf U_\a$ for $\a \in \Phi^+$. Then for any $\b \in w(\Phi^+-\Phi^+_J)$,
\[
\mathbf U_\b \cap {}^x I \subseteq {}^w U \cap {}^{x} I \subseteq {}^w (U \cap I) \subseteq I.
\]
The second inclusion follows from the assumption that $w \i x {\bf a}$ lies in the dominant chamber.
\end{proof}

\begin{prop} \label{reuman1}
Assume that the Dynkin diagram of $G$ is $\d$-connected. Let $b$ be basic.
Let $x \in \tW$, and write $x = \epsilon^\lambda u$, $u\in W$.
Assume that $\overline{\nu}_{\eta_2(x) \i \lambda} \ne \overline{\nu}_b$ and that
$\eta_\delta(x)\in \bigcup_{J \subsetneq \SS, \d(J)=J}W_J$. Then $X_x(b) = \emptyset$.
\end{prop}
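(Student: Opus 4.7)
The plan is to argue by contradiction: assuming $X_x(b) \ne \emptyset$, I derive a contradiction with the hypothesis $\overline{\nu}_{\eta_2(x)^{-1}\lambda} \ne \overline{\nu}_b$. Pick $J \subsetneq \SS$ with $\d(J)=J$ such that $\eta_\d(x) \in W_J$, and set $w = \eta_2(x)$, $\mu = w^{-1}\lambda$. Writing $w^{-1}x = \e^\mu w'$ with $w' = w^{-1}u$, the condition $\eta_\d(x) = \d^{-1}(w')w \in W_J$ translates to $w'\d(w) \in W_J$, so $w^{-1}x\d(w) = \e^\mu w_M$ for some $w_M \in W_J$. By Lemma~\ref{3.6.3}, $x\mathbf{a}$ is a $(J, w, \d)$-alcove.

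Corollary~\ref{ConsequBasic} then forces $[b] \cap M_J(L)$ to be nonempty and $\kappa_J(\e^\mu w_M) \in \kappa_J([b] \cap M_J(L))$. Invoking Proposition~\ref{s-conj} for the quasi-split Levi $M_J$, I choose a representative $b_M = \e^{\lambda_M} v_M \in \tW_J$ of this class. Since $b$ is basic, $\overline{\nu}_b$ is central in $G$ and therefore $W$-invariant, which forces $\nu_{b_M} = \overline{\nu}_b$ as an element of $V$.

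The core computation is to pass from $\kappa_J$-equality to a Newton-vector statement modulo $Q^\vee_J \otimes \QQ$. Concretely, $\kappa_J(\e^\mu w_M) = \kappa_J(b_M)$ reduces to $\mu - \lambda_M \in Q^\vee_J + (1-\d)X_*(T)_\Gamma$. The averaging operator $\l \mapsto \nu_\l = \frac{1}{n}\sum_{i=0}^{n-1}\d^i(\l)$ annihilates $(1-\d)X_*(T)_\Gamma$ (as $\d^n = \id$) and, thanks to $\d(J) = J$ giving $\d(Q^\vee_J) = Q^\vee_J$, maps $Q^\vee_J$ into $Q^\vee_J \otimes \QQ$. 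Hence $\nu_\mu - \nu_{\lambda_M} \in Q^\vee_J \otimes \QQ$. Combined with Section~\ref{newton}~(1) applied to $b_M \in \tW_J$ and with $\nu_{b_M} = \overline{\nu}_b$, this yields
\[
\nu_\mu - \overline{\nu}_b \in Q^\vee_J \otimes \QQ.
\]

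To close the argument, note that $w^{-1}x \in {}^\SS\tW$ places the alcove $(w^{-1}x)\mathbf{a}$ in the closed dominant chamber, so its vertex $\mu = (w^{-1}x)(0)$ is dominant; then $\nu_\mu$ is dominant as well, since $\d$ permutes $\SS$ for quasi-split $G$. In particular $\<\nu_\mu, \a\> \ge 0$ for all $\a \in J$, and Section~\ref{newton}~(2), applicable thanks to the $\d$-connectedness hypothesis on the Dynkin diagram and $J \subsetneq \SS$, forces $\nu_\mu = \overline{\nu}_b$. Taking $G$-dominant representatives gives $\overline{\nu}_\mu = \overline{\nu}_b$, the desired contradiction. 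The main subtlety is organizing the $\d$-averaging transition from the Kottwitz equality to a Newton-vector statement modulo $Q^\vee_J \otimes \QQ$, which is precisely what makes the $\d$-stability of $J$ indispensable.
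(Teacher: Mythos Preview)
Your proof is correct and follows essentially the same route as the paper's: apply Lemma~\ref{3.6.3} to obtain a $(J,w,\d)$-alcove, invoke Corollary~\ref{ConsequBasic} contrapositively to get a $\kappa_J$-equality, translate this via $\d$-averaging into a congruence of Newton vectors modulo $Q^\vee_J\otimes\QQ$, and finish with Section~\ref{newton}~(1),~(2). Your write-up is slightly more explicit about why $\mu=w^{-1}\lambda$ is dominant and why the averaging step works, but the underlying argument is the same as the paper's.
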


\begin{proof}
Write $w=\eta_2(x) \in W$. By Lemma \ref{3.6.3} and our hypothesis, $x{\bf a}$ is a $(J, w, \d)$-alcove for some $\d$-stable proper subset $J\subsetneq \SS$.
 The only thing we need to check in order to apply
Corollary \ref{ConsequBasic} is that $\kappa_{J}(w \i x \d(w))\not \in \kappa_{J}([b]\cap M_J(L))$. Here we denote by $[b]\subset G(L)$ the $\sigma$-conjugacy class of $b$. Otherwise, there exists $b_J\in M_J(L)$ which is $\sigma$-conjugate to $b$, and such that $\kappa_J(w\i x\d(w)) = \kappa_J(b_J)$. We may and will assume that $b_J\in \tW_J$. If we write $w\i x \d(w) = \e^{\lambda'} u'$, $b_J = \e^\mu v$, $u', v\in W_J$, then $\lambda'=w \i \l$ and for a suitable coweight $\theta$,
\[
\lambda' -\mu +\theta-\d(\theta) \in Q^\vee_J.
\]

Thus $\nu_{\l'}-\nu_\mu \in Q^\vee_J \otimes_\ZZ \QQ$. By Section \ref{newton} (1), $\nu_{\l'}-\nu_{b_J} \in Q^\vee_J \otimes_\ZZ \QQ$. Note that $\l'$ is dominant and $\nu_{b_J}=\bar \nu_b$ is central. By Section \ref{newton} (2), $\nu_{\l'}=\bar \nu_b$, which we have ruled out by assumption.
\end{proof}


Following \cite{GHKR2}, for any $a \in \Sigma$ and alcove ${\mathbf b}$, let $k(a, \mathbf b)$ be the unique integer $k$ such that $\mathbf b$ lies in the region between the hyperplanes $H_{a, k}$ and $H_{a, k-1}$.

\begin{prop}
\label{reuman3}
Let $x = \e^\l u$ lie in the shrunken Weyl chambers,  i.e.~$k(a, x\mathbf a)\ne k(a, \mathbf a)$ for all $a\in\Sigma$. Assume that $\eta_\d(x)\in\bigcup_{J\subsetneq \SS, \d(J)=J} W_J$. Then $\overline{\nu}_{\eta_2(x) \i \l}$ is not central.
 
If moreover, the Dynkin diagram of $G$ is $\d$-connected, then $X_x(b)=\emptyset$ for any basic element $b \in G(L)$.
\end{prop}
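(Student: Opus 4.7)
The plan is to derive the second assertion ($X_x(b)=\emptyset$) from the first (non-centrality of $\bar\nu_{\eta_2(x)\i\l}$) by invoking Proposition~\ref{reuman1}. Since $b$ is basic, $\bar\nu_b$ is central, so if the first assertion holds then $\bar\nu_{\eta_2(x)\i\l}\ne\bar\nu_b$; combined with the hypothesis $\eta_\d(x)\in\bigcup_{J\subsetneq\SS,\,\d(J)=J}W_J$ and the $\d$-connectedness assumption, Proposition~\ref{reuman1} gives $X_x(b)=\emptyset$. Note that $\d$-connectedness is only needed at this last step; the first assertion holds without it.

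For the first assertion, set $v=\eta_2(x)$, so that $v\i x\mathbf a$ lies in the dominant chamber. Writing $v\i x=\e^{v\i\l}\,v\i u$, a straightforward check (compare $\alpha(v\i\l)$ with the values of $\alpha$ on the near-origin alcove $v\i u\mathbf a$) shows that $v\i\l$ is dominant in $V$. Since $G$ is quasi-split, $\d$ permutes the finite simple reflections and hence preserves the dominant cone; so each $\d^i(v\i\l)$ is dominant, and the average $\nu:=\tfrac{1}{n}\sum_{i=0}^{n-1}\d^i(v\i\l)$ is dominant and coincides with $\bar\nu_{v\i\l}$. Because $G$ is semisimple, the relative roots span $V^*$, so the dominant cone in $V$ is pointed, and ``central in $V$'' is equivalent to ``zero''. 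It therefore suffices to show $\nu\ne 0$.

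Suppose for contradiction that $\nu=0$. Pointedness of the dominant cone forces each $\d^i(v\i\l)=0$, so $\l=0$ and $x=u\in W$. Using the identity $k(a,u\mathbf a)=k(u\i a,\mathbf a)$ together with the values $k(b,\mathbf a)=0$ for $b\in\Sigma^+$ and $k(b,\mathbf a)=1$ for $b\in\Sigma^-$ (which follow from $\mathbf a$ lying in the anti-dominant chamber and being bounded by the highest root), the shrunken condition $k(a,u\mathbf a)\ne k(a,\mathbf a)$ for every $a\in\Sigma$ forces $u\i\Sigma^+=\Sigma^-$, i.e.\ $u=w_0$, the longest element of $W$. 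Then $\eta_2(w_0)=1$, $\eta_1(w_0)=w_0$, and $\eta_\d(w_0)=\d\i(w_0)=w_0$, since $\d$ preserves length and $w_0$ is the unique longest element. But $w_0\in W_J$ forces $J=\SS$, contradicting the hypothesis that $\eta_\d(x)\in W_J$ for some proper $\d$-stable $J\subsetneq\SS$. The only step requiring any real thought is this translation from the shrunken condition to $u=w_0$; the rest is convexity and the defining formulas for $\eta_1,\eta_2,\eta_\d$.
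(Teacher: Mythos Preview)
Your proof is correct and follows essentially the same strategy as the paper: reduce the second assertion to the first via Proposition~\ref{reuman1}, then argue by contradiction that if $\bar\nu_{\eta_2(x)^{-1}\lambda}$ were central one would get $\lambda'=\eta_2(x)^{-1}\lambda$ central, hence $x\mathbf a=u\mathbf a$, and the shrunken hypothesis forces $u=w_0$, contradicting $\eta_\delta(x)\in W_J$ for proper $J$. Your use of the pointedness of the dominant cone to pass from $\nu=0$ to $\lambda'=0$ is a clean variant of the paper's argument (which instead pairs with the highest root $\beta$ and uses that $\langle\lambda',\beta\rangle=0$ together with dominance forces $\lambda'$ central); your version sidesteps the implicit irreducibility needed for a \emph{unique} highest root. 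One minor imprecision: when you conclude ``$\lambda=0$ and $x=u\in W$'', strictly speaking $\lambda$ could be torsion in $X_*(T)_\Gamma$, so one only gets $\lambda=0$ in $V$ and $x\mathbf a=u\mathbf a$; but this is all that is needed, since $\eta_1,\eta_2,\eta_\delta$ depend only on the image in $W$ and on the alcove $x\mathbf a$.
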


\begin{proof}
Let $n$ be the order of $W \rtimes \<\d\>$. Let $\l'=\eta_2(x) \i \l$. Suppose that $\overline{\nu}_{\l'}$ is central. Then $\l'+\d(\l')+\cdots+\d^{n-1}(\l')$ is central and $$\<\l'+\d(\l')+\cdots+\d^{n-1}(\l'), \b\>=n\<\l', \b\>=0,$$ where $\b$ is the unique maximal root. As $\l'$ is dominant, $\l'$ is central. Hence $x=\eta_2(x) \e^{\l'}=u \e^{\l'}$. Thus $x \mathbf a=u \mathbf a$. This alcove belongs to the shrunken Weyl chambers only if $u=w_0$. This contradicts our assumption that $\eta_\d(x)\in\bigcup_{J\subsetneq \SS, \d(J)=J} W_J$.

The ``moreover'' part follows from Proposition \ref{reuman1}. 
\end{proof}

\section{Reduction method and nonemptiness of ADLV}
\label{SEC:nonemptiness}


\subsection{Condition (2) of $(J, w, \d)$-alcoves} 
For any $a \in \Sigma$ and alcoves $\mathbf b_1$ and $\mathbf b_2$, we say that ${\mathbf b}_1 \ge_a {\mathbf b}_2$ if $k(a, {\mathbf b}_1) \ge k(a, {\mathbf b}_2)$. 

Condition (2) is equivalent to saying that for any $a \in w(\Phi^+-\Phi^+_J)$ and an affine root $\a=a+m$ (with $m \in \mathbb Q$), if $x{\mathbf a}$ is in the half-apartment $\a \i({[-\infty, 0]})$, then so is ${\mathbf a}$. We may then reformulate this definition as follows. 

(2') For any $a \in w(\Sigma^+-\Sigma^+_J)$, $x \mathbf a \ge_a \mathbf a$. 

In particular, this condition is just a condition on the relative position between certain alcoves and walls. Thus it only depends on the affine Weyl group and does not depends on the set of affine roots. 

\begin{prop} \label{reuman2}
Let $x\in\tW$ lie in the shrunken Weyl chambers. If $x$ is a $(J, w, \d)$-alcove for $J\subseteq \SS$ with $\d(J)=J$ and $w\in W$, then $\eta_\d(x)\in W_J$.
\end{prop}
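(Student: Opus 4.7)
The plan is to combine conditions (1) and (2') in the definition of a $(J,w,\d)$-alcove, using the shrunken hypothesis to convert condition (2') into a constraint on the finite Weyl element $v := \eta_2(x)$. Write $v^{-1}x = \e^\mu w'$ with $w'\in W$, so that $v^{-1}x\mathbf a$ lies in the dominant chamber and $\eta_\d(x) = \d^{-1}(w')v$. The goal is first to show $v^{-1}w\in W_J$, and then to deduce $\eta_\d(x)\in W_J$ from condition (1).

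The key geometric step I would establish is the following equivalence for $x$ in the shrunken Weyl chambers: for every $a\in\Sigma$,
\[
x\mathbf a >_a \mathbf a \quad\Longleftrightarrow\quad v^{-1}a\in\Sigma^+.
\]
This follows from the equivariance $k(a,x\mathbf a)=k(v^{-1}a,v^{-1}x\mathbf a)$ together with the dominance of $v^{-1}x\mathbf a$, by a short case analysis on the signs of $a$ and of $v^{-1}a$; the shrunken hypothesis $k(a,x\mathbf a)\neq k(a,\mathbf a)$ is precisely what promotes the weak inequality coming from mere dominance to a strict one. Condition (2') states that $x\mathbf a\ge_a\mathbf a$ for all $a\in w(\Sigma^+-\Sigma^+_J)$, which under shrunken becomes $>_a$, so the equivalence yields
\[
v^{-1}w(\Sigma^+-\Sigma^+_J)\subseteq \Sigma^+.
\]
In other words, the inversion set of $v^{-1}w$ is contained in $\Sigma_J^+$; by the standard Coxeter-theoretic characterization of $W_J$ this forces $v^{-1}w\in W_J$, and I may write $w=vw_J$ with $w_J\in W_J$.

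To finish, I would plug this into condition (1). The finite Weyl part of $w^{-1}x\d(w)\in\tW_J$ is $w^{-1}vw'\d(w)\in W_J$; substituting $w=vw_J$ and using the hypothesis $\d(J)=J$ (so that $\d(w_J)\in W_J$) reduces this to $w'\d(v)\in W_J$. Applying $\d^{-1}$ and using $\d^{-1}(W_J)=W_J$ then gives
\[
\eta_\d(x) = \d^{-1}(w')v = \d^{-1}\bigl(w'\d(v)\bigr)\in W_J,
\]
as required. The main obstacle is the geometric first step, correctly pinning down the equivalence above from the shrunken condition; once this is in hand, the remainder is a clean manipulation inside $\tW=X_*(T)_\G\rtimes W$.
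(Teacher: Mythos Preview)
Your proposal is correct and follows essentially the same route as the paper: use condition~(2') together with the shrunken hypothesis and the equivariance $k(a,x\mathbf a)=k(v^{-1}a,v^{-1}x\mathbf a)$ to deduce $v^{-1}w(\Sigma^+-\Sigma^+_J)\subseteq\Sigma^+$, hence $v^{-1}w\in W_J$, and then combine with condition~(1) to conclude $\eta_\d(x)\in W_J$. The paper's proof is just a terser version of exactly this argument; note that only the forward implication of your stated equivalence is actually used (the paper records it as the chain $k(v^{-1}a,v^{-1}x\mathbf a)=k(a,x\mathbf a)>k(a,\mathbf a)\ge 0$, forcing $v^{-1}a\in\Sigma^+$).
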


\begin{proof}
By the definition of the shrunken Weyl chambers and of $(J, w, \d)$-alcoves, for any $a \in w(\Sigma^+-\Sigma^+_J)$, 
\[
k(\eta_2(x) \i a, \eta_2(x) \i x \mathbf a)=k(a, x \mathbf a)>k(a, \mathbf a) \ge 0. 
\]

Since $\eta_2(x)\i x\mathbf a$ lies in the dominant chamber, $\eta_2(x) \i a \in \Sigma^+$ for all $a \in w(\Sigma^+-\Sigma^+_J)$. Therefore $\eta_2(x) \i w \in W_J$. By the definition of $(J, w, \d)$-alcoves, $w^{-1}\eta_1(x)\d(w)\in W_J$. Thus $\eta_\d(x) \in W_J$. 
\end{proof}


\subsection{Reduction method} In this section, we will recall the reduction method in \cite{He-GeometryOfADLV} and prove that $P$-alcoves are ``compatible'' with the reduction. As a consequence, we prove that an affine Deligne-Lusztig variety $X_w(b)$ for basic $b$ is nonempty exactly when the $P$-alcoves predict it to be. See Theorem~\ref{thm-nonemptiness} for the precise formulation; compare also with Corollary~\ref{ConsequBasic}.

We first recall a ``reduction method'' \`a la Deligne and Lusztig \cite[proof of Theorem 1.6]{DL}, compare also~\cite{GoertzHe1}.

\begin{prop}\label{dl-red}
Let $b \in G(L)$, $x \in \tW$ and $s \in \tSS$.

(1) If $\ell(s x \d(s))=\ell(x)$, then $X_x(b) \neq \emptyset$ if and only if $X_{s x \d(s)}(b) \neq \emptyset$.

(2)  If $\ell(s x \d(s))=\ell(x)-2$, then $X_x(b) \neq \emptyset$ if and only if $X_{s x \d(s)}(b) \neq \emptyset$ or $X_{s x}(b) \neq \emptyset$.
\end{prop}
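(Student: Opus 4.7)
The plan is to implement the classical Deligne--Lusztig reduction (cf.~\cite{DL}, \cite{GoertzHe1}) in the affine setting, using the standard parahoric $P_s := I \cup IsI$ and the $\mathbb P^1$-bundle $\pi \colon G(L)/I \to G(L)/P_s$. The key identity is that for $g \in G(L)$ and $p \in P_s$ one has $(gp)^{-1}b\sigma(gp) = p^{-1}(g^{-1}b\sigma(g))\sigma(p)$, so as $g'$ ranges over a $\pi$-fiber $gP_s$, the element $(g')^{-1}b\sigma(g')$ sweeps out the $\sigma$-twisted $P_s$-orbit of $y := g^{-1}b\sigma(g)$, which is contained in the $P_s$-$\sigma(P_s)$ double coset $P_s \cdot IxI \cdot \sigma(P_s)$.

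First I would compute this double coset combinatorially. The Iwahori--Hecke relations $IsI \cdot IwI = IswI$ when $sw > w$, and $IsI \cdot IwI \subseteq IwI \cup IswI$ otherwise, together with their mirror on the right, show that $P_s \cdot IxI \cdot \sigma(P_s)$ is a union of at most four $I$-double cosets indexed by a subset of $\{x,\, sx,\, x\sigma(s),\, sx\sigma(s)\}$. Under hypothesis (1), length considerations force only $IxI$ and $I(sx\sigma(s))I$ to occur; under (2), only $IxI$, $I(sx)I$, and $I(sx\sigma(s))I$ occur. Note that applying (1) to $sx$ in place of $x$ gives $X_{sx}(b) \ne \emptyset \iff X_{x\sigma(s)}(b) \ne \emptyset$, which explains why the proposition lists only $sx$ and not also $x\sigma(s)$.

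Next I would analyse the fibers of $\pi$. Decompose $P_s/I = \{e\} \sqcup IsI/I \cong \{\mathrm{pt}\} \sqcup \mathbb A^1$. For $p = e$ we have $p^{-1}y\sigma(p) = y \in IxI$; for $p$ in the affine-line part $IsI/I$, $p^{-1}y\sigma(p)$ lies in the remaining cells from the previous step. Passing to the reductive quotient $\overline{P}_s$ of $P_s$ (an $SL_2$ or $PGL_2$ times a torus), a rank-one calculation shows that this $\mathbb A^1$-family either is constantly in $IxI$ (so the whole $\mathbb P^1$-fiber sits inside $X_x(b)$) or else surjects onto the smaller cells on a Zariski-open subset. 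Combining: in case (1), by the visible symmetry between $x$ and $sx\sigma(s)$, every $\pi$-fiber meeting one of $X_x(b)$, $X_{sx\sigma(s)}(b)$ also meets the other (or lies entirely in one), giving the desired equivalence; in case (2), every fiber meeting $X_x(b)$ meets $X_{sx}(b) \cup X_{sx\sigma(s)}(b)$, and conversely any $g'$ in one of these smaller ADLVs sits in a fiber hitting $X_x(b)$.

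The main obstacle is the surjectivity claim in the fiberwise analysis: proving that the $\mathbb A^1$-family $\{p^{-1}y\sigma(p) : p \in IsI/I\}$ genuinely attains all of the smaller cells, outside the degenerate case of a constant fiber. This is the technical heart of the reduction and amounts to a Lang--Steinberg-type surjectivity for $\sigma$-twisted conjugation on $\overline{P}_s$; the general case reduces to $SL_2$ or $PGL_2$, where it can be verified by a direct matrix calculation analogous to the one in \cite{DL}.
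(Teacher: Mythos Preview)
The paper does not actually give a proof of this proposition; it is simply recalled from the literature with a pointer to \cite[proof of Theorem~1.6]{DL} and \cite{GoertzHe1}. Your sketch is essentially the standard argument found in those references: pass to the parahoric $P_s$, analyze the fiber $gP_s/I \cong \mathbb P^1$ via the Iwahori--Hecke relations, and read off which Iwahori double cosets the $\sigma$-twisted orbit can hit.

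Two remarks are in order, though. First, your double-coset bookkeeping is not quite right: under hypothesis~(1) (and likewise under~(2)) all four of $IxI$, $IsxI$, $Ix\delta(s)I$, $Isx\delta(s)I$ can occur inside $P_s\cdot IxI\cdot \sigma(P_s)$, not just two or three of them. What is true---and what actually matters for the argument---is that for $p$ ranging over the affine-line part $IsI/I$ (rather than over all of $P_s/I$), the element $p^{-1}y\sigma(p)$ lies in $IsI\cdot IxI\cdot I\delta(s)I$, and \emph{this} product collapses to fewer cells once you use the specific length inequalities in each case.

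Second, the ``surjectivity'' step you flag as the main obstacle is in fact unnecessary. One does not need Lang--Steinberg on $\overline{P}_s$; instead one argues directly by choosing $p$ judiciously. For instance, in case~(2), if $gI\in X_x(b)$ write $y=g^{-1}b\sigma(g)\in IxI \subset IsI\cdot IsxI$ (using $sx<x$), say $y=p_1 z$ with $p_1\in IsI$ and $z\in IsxI$; then $(gp_1)^{-1}b\sigma(gp_1)=z\sigma(p_1)\in IsxI\cdot I\delta(s)I = IsxI\cup Isx\delta(s)I$, so $gp_1I$ lies in $X_{sx}(b)$ or $X_{sx\delta(s)}(b)$. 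The converse directions are handled the same way, using that lengths add so that the relevant Hecke products are a single cell. This is exactly the computation carried out in \cite{GoertzHe1}, and it sidesteps any appeal to rank-one surjectivity.
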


\subsection{Minimal length elements}
Let $x, x' \in \tW$ and $s \in \tSS$. We write $x \overset s \to_\d x'$ if $x'=sx \d(s)$ and $\ell(x) \geq \ell(x')$ and write $x \overset s \rightharpoonup x'$ if either $x \overset s \to_\d x'$ or $x'=s x$ and $\ell(x)>\ell(x')$.

We write $x \to_\d x'$ if there exists a sequence $x_0, x_1, \cdots, x_r$ in $\tW$ and a sequence $s_1, s_2, \cdots, s_r$ in $\tSS$ such that $x=x_0 \overset {s_1} \to_\d x_1 \overset {s_2} \to_\d \cdots \overset {s_r} \to_\d x_r=x'$.  Similarly, we may define $x \rightharpoonup x'$.

We define the {\it $\d$-twisted conjugation action} of $\tW$ on itself by $w \cdot_\d w'=w w' \d(w) \i$. Any orbit is called a {\it $\d$-twisted conjugacy class} of $\tW$. For any $\d$-twisted conjugacy class $\co$ of $\tW$, we denote by $\co_{\min}$ the set of minimal length elements in $\co$.

One of the main results in \cite{HN} is

\begin{theorem}\label{min-co}
Let $\co$ be a $\d$-twisted conjugacy class of $\tW$. Then for any $x  \in \co$, there exists $x' \in \co_{\min}$ such that $x \to_\d x'$.
\end{theorem}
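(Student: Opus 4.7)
The plan is to proceed by induction on $\ell(x)$. If $x$ already has minimal length in $\co$ there is nothing to prove. Otherwise, the whole argument reduces to the following \emph{Key Claim}: if $\ell(x) > \min_{y \in \co}\ell(y)$, then there exists a chain
\[
x = x_0 \overset{s_1}\to_\d x_1 \overset{s_2}\to_\d \cdots \overset{s_r}\to_\d x_r
\]
of length-preserving moves (so $\ell(x_i)=\ell(x)$ for all $i$) together with $t \in \tSS$ satisfying $\ell(tx_r\d(t)) < \ell(x_r)$. Granting the Key Claim, the induction hypothesis applied to $tx_r\d(t) \in \co$ produces some $x' \in \co_{\min}$ with $tx_r\d(t) \to_\d x'$, and concatenating the chains yields $x \to_\d x'$.

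To set up the Key Claim, write $\tW = W_a \rtimes \Omega$, where $\Omega$ is the stabilizer of the base alcove $\mathbf a$. Since every simple affine reflection lies in $W_a$, and $\d$ preserves both $W_a$ and $\Omega$, $\d$-twisted conjugation by elements of $\tSS$ preserves each coset $W_a \tau$ ($\tau \in \Omega$) as well as the length. Thus one may reduce to studying a fixed coset $W_a\tau$, where the problem becomes one about an affine Coxeter group equipped with the twisted diagram automorphism $y \mapsto \tau\d(y)\tau^{-1}$; here the length has a transparent geometric meaning via alcove walks in the apartment $\ca$. Let $\sim_\d$ denote the equivalence relation on $\tW$ generated by length-preserving moves $\overset{s}\to_\d$. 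The Key Claim becomes: every $\sim_\d$-class of non-minimal length contains an element admitting a strict length decrease under some simple $\d$-twisted conjugation.

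The main obstacle is exactly this last statement, which is the technical core of \cite{HN}. The strategy there extends the philosophy of Geck--Pfeiffer, who treated the analogous problem for finite Weyl groups via a delicate analysis of cuspidal classes. In the affine twisted setting, two new ingredients are required: first, the Newton point $\bar\nu_x$, which is a $\d$-conjugation invariant and isolates the ``translation part'' of $\co$; and second, a geometric alcove-walk argument showing that if $x\mathbf a$ is not already in a distinguished position relative to the walls of $\mathbf a$, then a sequence of length-preserving cyclic shifts can transport it into a configuration where a wall of $\mathbf a$ is exposed and can be crossed, producing the desired length-decreasing move. Termination of the procedure is ensured by an explicit classification of the minimal length elements as products of a ``straight'' (translation-like) element with a minimal length representative of an associated finite twisted class, which serve as the terminal objects of the reduction.
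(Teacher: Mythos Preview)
The paper does not give its own proof of this theorem; it is simply quoted as one of the main results of \cite{HN}, with a one-line remark that the twisted case is covered there. Your proposal is, in substance, the same: after a correct induction setup and a correct reduction to the Key Claim (including the passage to a single coset $W_a\tau$ and the replacement of $\d$ by the automorphism $y\mapsto \tau\d(y)\tau^{-1}$ of $W_a$), you explicitly identify the Key Claim as ``the technical core of \cite{HN}'' and only sketch its ingredients rather than prove it. So both the paper and your proposal rely on \cite{HN} for the actual content.

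If the intent was to supply a self-contained argument, there is a genuine gap: the Key Claim---that any non-minimal element can be brought by length-preserving cyclic shifts to one admitting a strict length drop---is precisely the hard theorem, and your last paragraph is a description of the method of \cite{HN}, not a proof. The phrases ``geometric alcove-walk argument'' and ``explicit classification of the minimal length elements'' stand in for several pages of work (the construction of good position elements, the analysis of the set $V_{y}=\{v:y\d(v)=v+\nu_y\}$ and its intersection with $\bar{\mathbf a}$, and the reduction to straight elements). None of this is carried out here. If, on the other hand, the intent was merely to explain why the paper is entitled to invoke the result, then your write-up is accurate and slightly more informative than the paper's bare citation.
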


Note that \cite{HN} does also include the twisted case; there the action of $\d$ is incorporated by replacing $\tW$ by a semi-direct product of the form $\tW \rtimes \<\d\>$.

The following result is a consequence of the ``degree=dimension'' theorem in \cite{He-GeometryOfADLV}. We include here a proof for completeness.

\begin{theorem}\label{dxd}
Let $x \in \tW$ and $D_{x, \d}$ be the set of elements $y \in \tW$ such that $y$ is of minimal length in its $\d$-twisted conjugacy class and $x \rightharpoonup y$. Then for any $b \in G(L)$, $X_x(b) \neq \emptyset$ if and only if $X_y(b) \neq \emptyset$ for some $y \in D_{x, \d}$.
\end{theorem}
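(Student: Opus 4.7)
The plan is to induct on $\ell(x)$, combining the reduction Proposition~\ref{dl-red} with the minimal-length result Theorem~\ref{min-co}. For the base case, if $x$ is already of minimal length in its $\d$-twisted conjugacy class $\co$, then the trivial (empty) chain gives $x\in D_{x,\d}$, so the theorem holds with $y=x$. Otherwise, applying Theorem~\ref{min-co} to $\co$ furnishes a chain $x=x_0\overset{s_1}{\to}_\d x_1\overset{s_2}{\to}_\d\cdots\overset{s_r}{\to}_\d x_r$ with $x_r\in\co_{\min}$, hence $\ell(x_r)<\ell(x)$. Let $i$ be the smallest index with $\ell(x_{i+1})<\ell(x_i)$; the preceding steps preserve length, and $\ell(x_{i+1})=\ell(x_i)-2$.

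For the forward direction, suppose $X_x(b)\neq\emptyset$. Iterating Proposition~\ref{dl-red}(1) along the length-preserving portion of the chain gives $X_{x_i}(b)\neq\emptyset$. Proposition~\ref{dl-red}(2) applied to the step $x_i\overset{s_{i+1}}{\to}_\d x_{i+1}$ then yields $X_{x_{i+1}}(b)\neq\emptyset$ or $X_{s_{i+1}x_i}(b)\neq\emptyset$. Both $x_{i+1}$ and $s_{i+1}x_i$ have length strictly less than $\ell(x)$, and the partial chain $x=x_0\to_\d\cdots\to_\d x_i$ extended by the last step exhibits both $x\rightharpoonup x_{i+1}$ (via the first branch of $\rightharpoonup$) and $x\rightharpoonup s_{i+1}x_i$ (via the second branch). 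Thus $D_{x_{i+1},\d}\cup D_{s_{i+1}x_i,\d}\subseteq D_{x,\d}$, and the inductive hypothesis applied to whichever of these two elements has nonempty affine Deligne-Lusztig variety produces the required $y\in D_{x,\d}$.

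For the converse, given $y\in D_{x,\d}$ with $X_y(b)\neq\emptyset$, unpack the defining chain $x=z_0\overset{t_1}{\rightharpoonup}\cdots\overset{t_m}{\rightharpoonup}z_m=y$ and argue one step at a time. It suffices to show that for any single step $z\overset{t}{\rightharpoonup}z'$, nonemptiness of $X_{z'}(b)$ implies nonemptiness of $X_z(b)$. If $z'=t z \d(t)$ with $\ell(z')=\ell(z)$, this is Proposition~\ref{dl-red}(1) as an equivalence. In the length-dropping sub-cases ($z' = t z \d(t)$ with $\ell(z') = \ell(z)-2$, or $z' = t z$ paired with the simultaneous drop $\ell(t z \d(t))=\ell(z)-2$), the disjunction in Proposition~\ref{dl-red}(2) lifts nonemptiness of either $X_{t z \d(t)}(b)$ or $X_{tz}(b)$ directly to nonemptiness of $X_z(b)$.

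The main obstacle is essentially organizational: we must match the two reductions offered by Proposition~\ref{dl-red}(2) at the first length drop with the two branches in the definition of $\rightharpoonup$, thereby keeping both $x_{i+1}$ and $s_{i+1}x_i$ inside $D_{x,\d}$ as the induction proceeds. The deeper input that makes the argument work is Theorem~\ref{min-co}, which guarantees that a length drop must eventually occur in any $\d$-conjugacy class not already at its minimum, so the induction can always be initiated.
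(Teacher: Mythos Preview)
Your forward direction is the paper's argument verbatim: induct on $\ell(x)$, use Theorem~\ref{min-co} to reach some $x'=x_i$ with $x\to_\d x'$, $\ell(x')=\ell(x)$ and $\ell(sx'\d(s))=\ell(x')-2$, then apply Proposition~\ref{dl-red}(2) and the inclusion $D_{sx'\d(s),\d}\cup D_{sx',\d}\subseteq D_{x,\d}$. This part is fine.

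In the converse direction you insert an assumption that is not in the definition: you treat the second branch $z'=tz$, $\ell(z')<\ell(z)$, of $z\overset{t}{\rightharpoonup}z'$ as coming ``paired with the simultaneous drop $\ell(tz\d(t))=\ell(z)-2$''. The definition of $\rightharpoonup$ imposes no such condition, and when instead $\ell(tz\d(t))=\ell(z)$, Proposition~\ref{dl-red} gives only $X_z(b)\ne\emptyset\Leftrightarrow X_{tz\d(t)}(b)\ne\emptyset$ and says nothing that lets you pass from $X_{tz}(b)\ne\emptyset$ back to $X_z(b)\ne\emptyset$. The paper's one-line justification (``by Proposition~\ref{dl-red} and the definition of $D_{x,\d}$'') is no more detailed and skips the same point, so in this respect you have faithfully reproduced the paper's argument; but you should be aware that this step, as written, is not justified by Proposition~\ref{dl-red} alone.

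That the gap is genuine (not merely a missing sentence) can be seen already in affine $SL_2$ with $\d=\id$, $b=1$: take $x=s_1s_0=t^{\alpha^\vee}$ and $t=s_1$. Then $t x=s_0$ has length $1<2=\ell(x)$, so $x\overset{s_1}{\rightharpoonup}s_0$, and $s_0$ is of minimal length in its conjugacy class; but $\ell(t x\d(t))=\ell(s_0s_1)=2=\ell(x)$, so your ``simultaneous drop'' fails. Here $X_{s_0}(1)\ne\emptyset$ (indeed $s_0$ is not a $(\emptyset,w,\id)$-alcove for any $w$, so no Levi obstruction), while $X_{s_1s_0}(1)=\emptyset$ (since $t^{\alpha^\vee}$ is a $(\emptyset,1,\id)$-alcove with $\kappa_T(t^{\alpha^\vee})=\alpha^\vee\ne 0$). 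Thus the single-step implication $X_{tz}(b)\ne\emptyset\Rightarrow X_z(b)\ne\emptyset$ that your converse argument needs is actually false in this configuration.
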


\begin{proof}
If $y \in D_{x, \d}$ and $X_y(b) \neq \emptyset$, then by Proposition \ref{dl-red} and the definition of $D_{x, \d}$, $X_x(b) \neq \emptyset$. Now we assume that $X_x(b) \neq \emptyset$. We proceed by induction on the length of $x$.

If $x$ is a minimal length element in its $\d$-twisted conjugacy class $\co$, then $x \in D_{x, \d}$. The statement is obvious.

Suppose that $x$ is not a minimal length element in its $\d$-twisted conjugacy class. By Theorem \ref{min-co}
there exists $x' \in \tW$ and $s \in \tSS$ such that $x \to_\d x'$, $\ell(x)=\ell(x')$ and $\ell(s x' \d(s))=\ell(x')-2$. By Proposition \ref{dl-red}, $X_{x'}(b) \neq \emptyset$ and $X_{s x' \d(s)}(b) \neq \emptyset$ or $X_{s x'}(b) \neq \emptyset$. Since $\ell(s x' \d(s)), \ell(s x')<\ell(x)$, by induction hypothesis, there exists $y \in D_{s x' \d(s), \d} \cup D_{s x', \d}$ such that $X_y(b)\ne\emptyset$. By definition, $D_{s x' \d(s), \d} \cup D_{s x', \d} \subset D_{x, \d}$. So $y \in D_{x, \d}$. The statement holds for $x$.
\end{proof}

\subsection{Property (NLO)}
\label{sec:PropertyStar}

We now fix a basic element $b\in \tW$.

\begin{defn}
We say that $y\in \tW$ has property (NLO) (with respect to $b$), if for every pair $(J, w)$ with $J \subset \SS$, $\d(J)=J$ and $w \in W$, such that $y$ is a $(J, w, \d)$-alcove, there exists $b_{J} \in w \tW_{J} \d(w)\i$ such that
\begin{enumerate}
\item
$\kappa_G(b) = \kappa_G(b_{J})$,
\item
$\nu_{b_{J}} = \nu_b$,
\item
$\kappa_{J}(w\i b_{J} \d(w)) = \kappa_{J}(w\i y\d(w))$.
\end{enumerate}
\end{defn}

Here (NLO) stands for \emph{no Levi obstruction}: Heuristically, affine Deligne-Lusztig varieties should be non-empty, unless there is an evident obstruction. For instance, if $\kappa_G(b) \ne \kappa_G(x)$, then $X_x(b)=\emptyset$, as is easily checked. Moreover, as the previous results show, an obstruction of a similar kind can originate from other Levi subgroups of $G$. This kind of obstruction is formalized in the above definition, and we will see that it is in fact the only obstruction to non-emptiness.

By Theorem \ref{dxd}, to prove the nonemptiness, one only needs to examine the claim for the reduction step and for minimal length elements. 

\begin{lemma}\label{lem.2.1}
Denote by $x \mapsto \bar{x}$ the projection $\tW \rightarrow W$. Let $y \in \tW$ and $s \in \tilde \SS$. Assume that $s=s_H$ for some affine root hyperplane $H=H_{\a,k}$ with $\a \in \Sigma$ and $k \in \ZZ$. Let $\b \in \Sigma$.
\begin{enumerate}
\item If $\b \notin \{\pm \a, \pm \bar y\d(\a)\}$, then $sy\d(s) \ba \ge_{\bar s(\b)} \ba$  if and only if $y\ba \ge_\b \ba$ .
\item If $\b \neq \pm \bar y\d(\a)$, then $y\ba \ge_\b \ba$ if and only if $y\d(s) \ba \ge_\b \ba$.
\end{enumerate}
\end{lemma}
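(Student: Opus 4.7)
The plan is to reduce both statements to a single transformation formula for the integer $k(\b, X)$ under the action of $\tW$ on alcoves. Writing $g = \e^\mu \bar g \in \tW$ and tracking how $\b$ evaluates on a generic $v = \mu + \bar g v' \in gX$ gives, directly from the definition of $k$,
$$k(\b, gX) = k(\bar g^{-1}\b, X) - \<\b, \mu\>,$$
so that for any two alcoves $X, Y$ one obtains $k(\b, gX) - k(\b, gY) = k(\bar g^{-1}\b, X) - k(\bar g^{-1}\b, Y)$. The second ingredient I will need is the elementary observation: if $X'$ is the reflection of $X$ across a single affine hyperplane of finite direction $\g$, then $k(\b, X) = k(\b, X')$ for every root $\b \ne \pm\g$.

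Part (2) should then follow essentially at once. The element $\d(s)$ is again a simple affine reflection (since $\d$ preserves $\tSS$), whose fixed wall is a wall of $\ba$ of finite direction $\d(\a)$; hence $\ba$ and $\d(s)\ba$ are adjacent alcoves separated by that wall. Applying the transformation formula with $g = y$ and then the observation above, I get
$$k(\b, y\d(s)\ba) - k(\b, y\ba) = k(\bar y^{-1}\b, \d(s)\ba) - k(\bar y^{-1}\b, \ba) = 0$$
whenever $\bar y^{-1}\b \ne \pm\d(\a)$, i.e.\ $\b \ne \pm\bar y\d(\a)$. This gives (2).

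For part (1), I will apply the transformation formula with $g = s$ to the pair $X = y\d(s)\ba$ and $Y = s\ba$ (so that $sY = \ba$, since $s$ is an involution). This yields
$$k(\bar s\b, sy\d(s)\ba) - k(\bar s\b, \ba) = k(\b, y\d(s)\ba) - k(\b, s\ba).$$
The assumption $\b \ne \pm\a$ lets me invoke the observation on the adjacent pair $\ba, s\ba$ to replace $k(\b, s\ba)$ by $k(\b, \ba)$, while the assumption $\b \ne \pm\bar y\d(\a)$ lets me invoke part (2) to replace $k(\b, y\d(s)\ba)$ by $k(\b, y\ba)$. The right-hand side then equals $k(\b, y\ba) - k(\b, \ba)$, and the equivalence of the two inequalities in (1) is immediate.

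The one subtle point to be careful about is the bookkeeping of the finite directions attached to the various affine hyperplanes in play ($H$ itself, $\d(H)$, and its $y$-translate), together with the sign convention in the transformation formula; once these are nailed down, both parts reduce to the short calculations above.
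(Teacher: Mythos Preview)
Your argument is correct, and in fact yields the slightly stronger equality
\[
k(\bar s(\b), sy\d(s)\ba) - k(\bar s(\b), \ba) \;=\; k(\b, y\ba) - k(\b, \ba),
\]
rather than merely the equivalence of the two inequalities. The paper's proof is more geometric: it picks a point $e$ in the common closure $\overline{\ba}\cap s\overline{\ba}$, transports it by $y\d$ to a point $e'$ in $\overline{y\ba}\cap \overline{y\d(s)\ba}$, and then uses that two alcoves whose closures share a point off every $\b$-wall have the same $k(\b,\cdot)$. Your transformation formula $k(\b,gX)-k(\b,gY)=k(\bar g^{-1}\b,X)-k(\bar g^{-1}\b,Y)$ is exactly the algebraic counterpart of that geometric step (applying the isometry $s$), so the two proofs are really the same idea packaged differently; yours has the advantage of being completely explicit and avoiding the genericity argument for~$e$.

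One point to tighten: your ``elementary observation'' is stated too generally. Reflecting an arbitrary alcove $X$ across an \emph{arbitrary} affine hyperplane of direction $\g$ does \emph{not} preserve $k(\b,\cdot)$ for $\b\neq\pm\g$; a far-away $\g$-wall will move $X$ a long distance and change $k(\b,X)$. What is true, and what you actually use, is the case where the hyperplane is a \emph{wall of $X$}, so that $X$ and $X'$ are adjacent; then no $\b$-hyperplane can separate them unless $\b=\pm\g$. Since both applications are to the adjacent pairs $(\ba,s\ba)$ and $(\ba,\d(s)\ba)$ (or their $y$-translates), the proof is unaffected --- just correct the hypothesis of the observation. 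The sign in your transformation formula is also off (it should be $+\langle\b,\mu\rangle$ rather than $-\langle\b,\mu\rangle$), but as you note this is irrelevant once you pass to differences.
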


\begin{proof}
We only prove (1). (2) can be proved in the same way.

By the assumption on $\b$, there exists a point $e \in \overline{\ba} \cap s\overline{\ba} \subset H$ such that, with $e':=y\d(e) \in y \bar{\ba} \cap y \d(s)\bar{\ba} \subset y\d H$, we have $(e, \b), (e', \b) \notin \ZZ$. The statement follows from the following fact which is easily checked:

Let $\mathbf c\ne \mathbf c', \mathbf d\ne\mathbf d'$ be alcoves such that there exist $e \in \overline{\mathbf c}\cap\overline{\mathbf c'}$, $e' \in \overline{\mathbf d}\cap\overline{\mathbf d'}$, and let $\b \in \Sigma$ with $(e, \b), (e',\b) \not\in\ZZ$. Then $\mathbf c \ge_{\b} \mathbf d$ if and only if $\mathbf c' \ge_{\b} \mathbf d'$.
\end{proof}

\begin{lemma}\label{lem.2.3}
Let $y \in \tW$ and $s \in \tSS$ with $\ell(s y \d(s))=\ell(y)$. If $y \ba$ is a $(J, w, \d)$-alcove, then  $sy\d(s)\ba$ is a $(J, \bar s w, \d)$-alcove.
\end{lemma}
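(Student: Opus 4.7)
The plan is to verify the two defining conditions of a $(J, \bar s w, \d)$-alcove directly, by translating those satisfied by $y\mathbf a$.

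For condition (1), I would write $s = \e^\nu \bar s$ in the semidirect product $\tW = X_*(T)_\G \rtimes W$, so that $\d(s) = \e^{\d(\nu)}\d(\bar s)$. A short calculation, sliding translations past Weyl group elements and using $\d(\bar s)^2 = 1$, yields
\[
(\bar s w)^{-1}(sy\d(s))\d(\bar s w) \;=\; \e^{w^{-1}\bar s(\nu)}\cdot (w^{-1} y \d(w))\cdot \e^{\d(w)^{-1}\d(\nu)}.
\]
The middle factor lies in $\tW_J$ by hypothesis, and the flanking translations lie in $X_*(T)_\G \subseteq \tW_J$, so the whole product is in $\tW_J$ and condition (1) holds.

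For condition (2) I would use the equivalent reformulation (2') and show $sy\d(s)\mathbf a \ge_a \mathbf a$ for every $a \in \bar s w(\Sigma^+ - \Sigma^+_J) = \bar s(w(\Sigma^+ - \Sigma^+_J))$. Writing $a = \bar s(\beta)$ with $\beta \in w(\Sigma^+ - \Sigma^+_J)$, the $(J,w,\d)$-alcove hypothesis on $y$ gives $y\mathbf a \ge_\beta \mathbf a$. When $\beta \notin \{\pm\alpha, \pm\bar y \d(\alpha)\}$, with $\alpha \in \Sigma$ the finite root attached to the wall defining $s$, Lemma \ref{lem.2.1}(1) transports this immediately to $sy\d(s)\mathbf a \ge_{\bar s(\beta)}\mathbf a$, i.e.\ $sy\d(s)\mathbf a \ge_a \mathbf a$, as desired.

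The main obstacle is the handful of exceptional values $\beta \in \{\pm\alpha, \pm \bar y\d(\alpha)\}$ not covered by Lemma \ref{lem.2.1}(1). Here the length hypothesis $\ell(sy\d(s)) = \ell(y)$ must be exploited: it constrains how the walls $H$ (with normal direction $\alpha$) and $y\d(H)$ (with normal direction $\bar y\d(\alpha)$) sit relative to $\mathbf a$ and $y\mathbf a$, forcing the reflections across these two walls to have opposite effects on the count of affine hyperplanes separating the moving alcove from $\mathbf a$ (so that the total count is preserved). Combining this geometric input with Lemma \ref{lem.2.1}(2) (for the passage $y\mathbf a \leadsto y\d(s)\mathbf a$) and its evident left-multiplication analogue (for $y\d(s)\mathbf a \leadsto sy\d(s)\mathbf a$), I expect to dispatch each of the four exceptional cases via the reflection formula $k(\alpha, s_{\alpha,k}\mathbf c) = 2k + 1 - k(\alpha, \mathbf c)$. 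The bookkeeping for this case analysis is the delicate point, but the argument should reduce to a short finite verification.
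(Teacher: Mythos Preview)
Your plan is correct and follows the paper's route: condition~(1) by direct computation, the generic case of~(2') via Lemma~\ref{lem.2.1}(1), and the exceptional roots via the length hypothesis together with the reflection formula.

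One ingredient you will need but do not mention: the exceptional roots come paired. Since $w^{-1}\bar y\,\d(w)\in W_J$ and $W_J$ stabilizes $\Sigma^+\setminus\Sigma_J^+$, one has $\alpha\in w(\Sigma^+\setminus\Sigma_J^+)$ if and only if $\bar y\,\d(\alpha)\in w(\Sigma^+\setminus\Sigma_J^+)$; so whenever one exceptional $\beta$ occurs, both do, and you may use the hypotheses $y\mathbf a\ge_\alpha\mathbf a$ and $y\mathbf a\ge_{\bar y\d(\alpha)}\mathbf a$ simultaneously. This is essential: the reflection formula plus $y\mathbf a\ge_\alpha\mathbf a$ alone does not force $sy\d(s)\mathbf a\ge_{-\alpha}\mathbf a$. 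The paper exploits this pairing and then splits according to whether $H=y\d(H)$ (where $\mathbf a$ and $sy\d(s)\mathbf a$ both touch $H$ on the same side, giving equality) or $H\neq y\d(H)$ (where, after assuming without loss of generality $\ell(y\d(s))<\ell(y)$, the length condition pins down on which side of $H$ and of $y\d(H)$ each relevant alcove sits). Your factored passage $y\mathbf a\leadsto y\d(s)\mathbf a\leadsto sy\d(s)\mathbf a$ amounts to the same computation once the pairing is in hand.
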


\begin{proof}
It suffices to show that $sy\d(s) \ba \ge_\b \ba$ for $\b \in \bar s w(\Sigma^+- \Sigma^+_J)$.

Assume that $s=s_H$ for some affine root hyperplane $H=H_{\a,k}$ with $\a \in \Sigma$ and $k \in \ZZ$. If $\b \notin \{\pm \a, \pm \bar s \bar y\d(\a)\}$, the statement follows from Lemma \ref{lem.2.1}.

Without loss of generality, we assume that $-\a \in \bar s w(\Sigma^+-\Sigma^+_J)$. Then $w \i(\a), \d(w \i (\a)) \in \Sigma^+-\Sigma^+_J$. So $w \i \bar y \d(w) \d(w \i (\a))=w \i \bar y \d(\a) \in \Sigma^+-\Sigma^+_J$ and $\bar s \bar y \d(\a) \in \bar s w(\Sigma^+-\Sigma^+_J)$.

It remains to show that $sy\d(s) \ba \ge_{-\a} \ba $ and $sy\d(s) \ge_{\bar s\bar y\d(\a)} \ba$. There are two cases.

Case 1: $H=y\d(H)$. Then $-\a=\bar s \bar y\d(\a)$. So $\ba, sy\d(s) \ba$ are on the same side of $H$ and their closures intersect with $H$. Hence $sy\d(s) \ba=_\a \ba$.

Case 2: $H \neq y\d(H)$. Without loss of generality, we assume that $\ell(y\d(s))<\ell(y)$ (arguments for the case $\ell(y\d(s))>\ell(y)$ are similar). In this case, $y\d H$ separates $y\ba$ from $y\d(s)\ba$ and $\ba$. Since $y \ba$ is a $(J, w, \d)$-alcove, $y\ba \ge_{\bar y\d(\a)} \ba$. Hence $y\ba \ge_{\bar y\d(\a)} y\d(s)\ba$ and $\ba \ge_\a s\ba$. Since $\ell(y\d(s))<\ell(sy\d(s))=\ell(y)$, $s\ba, sy\d(s)\ba$ are on the same side of $H$, therefore $sy\d(s)\ba \ge_{-\a} \ba$.

Since $y\ba \ge_{\bar y\d(\a)} \ba$, $sy\ba \ge_{\bar s \bar y \d(\a)} sy\d(s) \ba$. As $\ell(sy)>\ell(sy\d(s))$, $\ba, sy\d(s)\ba$ are on the same side of $sy\d H$. Moreover, the closure $sy\d(s) \bar \ba$ intersects with $sy\d H$. Therefore $sy\d(s)\ba \ge_{\bar s \bar y \d(\a)} \ba$.
\end{proof}

\begin{lemma}\label{alpha-jj}
Let $J, J' \subset \SS$ with $\d(J)=J$, $\d(J')=J'$. Let $y \in \tW$ and $\a \in \Sigma$. If there exist $w, w' \in W$ such that $w \i y \d(s_\a) \d(w) \in \tW_J$ and $(w') \i y \d(w') \in \tW_{J'}$, then $w \i(\a) \in \Sigma_J$ or $(w') \i(\a) \in \Sigma_{J'}$.
\end{lemma}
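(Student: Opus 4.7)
The plan is to reduce both Iwahori-Weyl conditions to statements in the finite Weyl group $W$, exhibit the reflection $s_\a$ as an element of the product of two parabolic subgroups of the semi-direct product $W \rtimes \<\d\>$, and then conclude via a general reflection-factorization fact.

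Writing $y = \e^\mu u$ with $\mu \in X_*(T)_\G$ and $u \in W$, the hypotheses $w\i y \d(s_\a) \d(w) \in \tW_J$ and $(w')\i y \d(w') \in \tW_{J'}$ depend only on the finite part $u$, because $\tW_J = X_*(T)_\G \rtimes W_J$ (and analogously for $J'$). They reduce to the two identities $w\i u s_{\d(\a)} \d(w) \in W_J$ and $(w')\i u \d(w') \in W_{J'}$ in $W$.

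Next I would pass to $W \rtimes \<\d\>$. Since $\d(J) = J$ and $\d(J') = J'$, the subsets $H_J := W_J \rtimes \<\d\>$ and $H_{J'} := W_{J'} \rtimes \<\d\>$ are genuine subgroups, and the two conditions above translate to: the element $u\d \in W\d$ lies in the conjugate subgroup $w' H_{J'} (w')\i$, and the element $u\d \cdot s_\a = u s_{\d(\a)} \d$ lies in $w H_J w\i$. Since $(u\d)\i$ then also lies in $w' H_{J'} (w')\i$, one obtains
\[
s_\a \;=\; (u\d)\i \cdot (u\d \cdot s_\a) \;\in\; \bigl(w' H_{J'} (w')\i\bigr) \cdot \bigl(w H_J w\i\bigr),
\]
exhibiting the finite reflection $s_\a$ as an element of the product of two ``twisted parabolic'' subgroups of $W \rtimes \<\d\>$.

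The conclusion then follows from the general fact that \emph{a reflection lying in the product of two parabolic subgroups must itself lie in one of them}. Applied here, this forces $s_\a \in w' H_{J'} (w')\i$ or $s_\a \in w H_J w\i$; intersecting with $W$, the $W$-parts of these conjugates are $w' W_{J'} (w')\i$ and $w W_J w\i$ respectively, which translate to $(w')\i(\a) \in \Sigma_{J'}$ or $w\i(\a) \in \Sigma_J$, as required. The main obstacle is precisely this general reflection-factorization statement: for finite Coxeter groups it can be established via the length-additive unique decomposition of the double coset $P_1 \cdot e \cdot P_2$ combined with a reflection-length (or fixed-subspace codimension) argument forcing any two-factor expression of a reflection to collapse, and the mild $\d$-twist present here is handled by absorbing the cocycle terms $w\d(w)\i$ and $w'\d(w')\i$ into the parabolic factors, reducing to the untwisted case.
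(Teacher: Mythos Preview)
Your reduction is correct and the reformulation is clean: you correctly show that $s_\a$ lies in the product $A\cdot B$, where $A=w'H_{J'}(w')^{-1}$ and $B=wH_Jw^{-1}$ are the $W$-conjugates of $W_{J'}\rtimes\langle\d\rangle$ and $W_J\rtimes\langle\d\rangle$ inside $W\rtimes\langle\d\rangle$, and what remains is exactly the claim that a reflection lying in $A\cdot B$ already lies in $A$ or in $B$.

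The gap is that this key claim is asserted rather than proved, and your proposed justification does not work as written. The ambient group $W\rtimes\langle\d\rangle$ is not a Coxeter group, so there is no length function to invoke; your two factors $(u\d)^{-1}$ and $u\d s_\a$ both lie outside $W$, so one cannot ``absorb cocycle terms and reduce to the untwisted case'' in any evident way; and even in a genuinely untwisted situation the subgroups here are \emph{conjugates} of standard parabolics, for which length-based or support-based arguments do not directly apply.

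The claim is nevertheless true, and its proof is a three-line isometry computation---which is exactly what the paper does. Realize $W\rtimes\langle\d\rangle$ as isometries of $V$, and note that $A$ and $B$ are the full stabilizers of the points $v'=w'(v'_0)$ and $v=w(v_0)$, where $v_0,v'_0\in V^\d$ are dominant with $W$-stabilizers $W_J$, $W_{J'}$. From $s_\a=ab$ with $a(v')=v'$ and $b(v)=v$ one gets $s_\a(v)=a(v)$, hence
\[
\|v'-v\|=\|a(v')-a(v)\|=\|v'-s_\a(v)\|,
\]
which forces $\langle v,\a\rangle\langle v',\a\rangle=0$, i.e.\ $s_\a$ fixes $v$ or $v'$, i.e.\ $s_\a\in B$ or $s_\a\in A$. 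This is precisely the paper's argument (there $y\d$ plays the role of your $u\d$). So your approach is not wrong, but the step you defer is the entire content of the lemma; once filled in, you have reproduced the paper's proof inside a more elaborate wrapper.
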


\begin{proof}
Let $V$ be the real vector space spanned by the coweights. Let $v_0, v'_0 \in V^\d$ be dominant coweights such that  for any $u \in W$, $u(v_0)=v_0$ (resp. $u(v_0')=v_0'$) if and only if $u \in W_J$ (resp. $u \in W_{J'}$). Set $v=w(v_0)$ and $v'=w'(v_0')$. Then $y \d(v')=y \d(w'v'_0)=w' (v'_0)=v'$ and $y \d(s_\a) \d(v)=y \d(s_\a) \d(w) (v_0)=w (v_0)=v$. Now \begin{align*}\<v'-s_\a(v),v'-s_\a(v)\> &=\<y \d(v'-s_\a(v)),y \d(v'-s_\a(v))\>\\ &=\<v'-v,v'-v\>.\end{align*} Hence $\<v', s_\a(v)\>=\<v', v\>$. If $w \i(\a) \notin \Sigma_J$, then $\<v, \a\> \neq 0$. So $\<v', \a\>=0$ and $(w') \i(\a) \in \Sigma_{J'}$.
\end{proof}

\begin{theorem}\label{compatible}
Let $y \in \tW$ such that property (NLO) holds for $y$. Let $s \in \tSS$.

(1) If $\ell(s y \d(s))=\ell(y)$, then property (NLO) holds for $s y \d(s)$;

(2) If $\ell(s y \d(s))=\ell(y)-2$, then property (NLO) holds for $s y \d(s)$ or $y \d(s)$.
\end{theorem}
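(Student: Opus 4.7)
The plan is to reduce each case to the hypothesis that (NLO) holds for $y$, by transporting $(J, w, \d)$-alcove structures together with their associated elements $b_J$ between $y$ and the output of the reduction step. For part (1), fix $y' := sy\d(s)$ and any $(J, w', \d)$-alcove structure on $y'\ba$. Lemma \ref{lem.2.3}, applied with the roles of $y$ and $y'$ exchanged (using $y = sy'\d(s)$), shows that $y\ba$ is a $(J, \bar{s}w', \d)$-alcove. Setting $w := \bar{s}w'$ and invoking (NLO) for $y$ produces $b_J \in w\tW_J\d(w)\i$ with the three required properties. I define $b'_J := \bar{s}b_J\d(\bar{s})\i$, which lies in $w'\tW_J\d(w')\i$ since $\bar s^2 = 1$ and $w' = \bar s w$. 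Writing $s = \e^\mu\bar s$ with $\mu = k\a^\vee$ (where $s$ fixes $H_{\a, k}$), a direct computation gives
\[
(w')\i y'\d(w') \;=\; \e^{(w')\i\mu}\cdot w\i y\d(w)\cdot \e^{\d(w\i\mu)},
\]
and the identity $\bar s(\a^\vee) = -\a^\vee$ yields $(w')\i\mu + \d(w\i\mu) = (1-\d)((w')\i\mu)$, which vanishes in $\pi_1(M_J)_{\Gamma_F}$ under $\k_J$. Thus condition (3) for $b'_J$ reduces to condition (3) for $b_J$, and conditions (1), (2) follow from $\bar s \in W \subseteq W_a \subseteq \ker\k_G$ and the $\s$-conjugacy of $b'_J$ with $b_J$ via $\bar s$.

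For part (2), I argue by contradiction: assume neither $sy\d(s)$ nor $y\d(s)$ satisfies (NLO), and fix $(J_1, w'_1, \d)$-alcove structure on $(sy\d(s))\ba$ and $(J_2, w'_2, \d)$-alcove structure on $(y\d(s))\ba$ witnessing the failures. Lemma \ref{lem.2.1}(2) shows that $y\ba$ and $y\d(s)\ba$ satisfy the same $\ge_\b$ relation to $\ba$ for every $\b \ne \pm\bar y\d(\a)$, so condition (2') of the $(J_2, w'_2, \d)$-alcove on $y\d(s)\ba$ transfers to $y\ba$ away from the single direction $\pm\bar y\d(\a)$; combining Lemma \ref{lem.2.1}(1) with the length-one step between $sy\d(s)$ and $y\d(s)$, condition (2') of the $(J_1, w'_1, \d)$-alcove similarly transfers from $sy\d(s)\ba$ to $y\ba$ up to the same exceptional direction. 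Applying Lemma \ref{alpha-jj} with $\a$ there taken to be $\bar y\d(\a)$ (and $J, J', w, w'$ specialized appropriately via $J_1, J_2, w'_1, w'_2$), I deduce $(w'_1)\i(\bar y\d(\a)) \in \Sigma_{J_1}$ or $(w'_2)\i(\bar y\d(\a)) \in \Sigma_{J_2}$. In either case the exceptional direction is absorbed into the parabolic root subsystem of one Levi, which simultaneously places the translation $\e^{k\bar y\d(\a)^\vee}$ separating $y$ from $y\d(s)$ inside $Q^\vee_{J_i}\subseteq\tW_{J_i}$ and removes that direction from $w_i(\Sigma^+-\Sigma^+_{J_i})$. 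Thus the corresponding structure lifts honestly to a $(J_i, w_i, \d)$-alcove on $y\ba$; (NLO) for $y$ supplies $b_{J_i}$, which I transport back (by the $\bar s$-recipe of part (1) for $i = 1$, or by a direct calculation exploiting the translational inclusion for $i = 2$) to obtain an element contradicting the assumed failure.

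The main obstacle is the precise setup for applying Lemma \ref{alpha-jj} in part (2): one must identify the correct candidate for the auxiliary $y$ and Weyl group elements so that the lemma's hypotheses are met, and must handle the translational discrepancy $\e^{\bar y\d(k\a^\vee)}$ between $y\d(s)$ and $y\d(s_\a)$ (the latter being what the lemma directly concerns). Once this is correctly framed, the lemma's conclusion delivers exactly the structural information needed: the critical direction $\bar y\d(\a)$ lies inside one of $\Sigma_{J_1}, \Sigma_{J_2}$, which is both the algebraic condition placing the translation in $\tW_{J_i}$ and the geometric condition rendering condition (2') automatic in that direction.
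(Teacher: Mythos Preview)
Your part (1) is correct and matches the paper's argument; the paper takes $b'_J = s b_J \d(s)$ rather than $\bar s b_J \d(\bar s)$, but both work, and your verification of condition (3) via $(1-\d)((w')^{-1}\mu)$ vanishing under $\kappa_J$ is fine.

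Part (2) has the right overall idea but a genuine gap in the application of Lemma~\ref{alpha-jj}. You propose to apply the lemma with the root taken to be $\bar y\d(\a)$, but the lemma's hypotheses then require some $Y$ with $w^{-1}Y\d(s_{\bar y\d(\a)})\d(w)\in\tW_{J_1}$ and $(w')^{-1}Y\d(w')\in\tW_{J_2}$, and there is no natural candidate: neither $Y=sy\d(s)$ nor $Y=y\d(s)$ nor $Y=y$ produces the first hypothesis from what you actually know. The correct input is $A=\a$, $Y=y\d(s)$, $w=\bar s w'_1$, $w'=w'_2$: then $w^{-1}Y\d(s_\a)\d(w)=(w'_1)^{-1}s_\a y\d(s)\d(w'_1)$ differs from $(w'_1)^{-1}sy\d(s)\d(w'_1)\in\tW_{J_1}$ only by a translation, and the second hypothesis is condition (1) for $(J_2,w'_2)$ on $y\d(s)$. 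The conclusion is then $(w'_1)^{-1}(\a)\in\Sigma_{J_1}$ or $(w'_2)^{-1}(\a)\in\Sigma_{J_2}$, about $\a$ rather than $\bar y\d(\a)$.

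Two further points your sketch glosses over. First, Lemma~\ref{lem.2.1}(1) has \emph{two} exceptional directions, $\pm\a$ and $\pm\bar y\d(\a)$, so your claim that the transfer of condition (2') from $sy\d(s)\ba$ to $y\ba$ fails only ``up to the same exceptional direction'' is incomplete. Second, $y$ and $y\d(s)$ do not differ by a translation: $\d(s)=\e^{k\d(\a^\vee)}\d(s_\a)$ has a reflection part $\d(s_\a)$, so condition (1) for $y$ from condition (1) for $y\d(s)$ requires precisely $(w'_2)^{-1}(\a)\in\Sigma_{J_2}$, not just the absorption of a translation. Once you have $(w'_i)^{-1}(\a)\in\Sigma_{J_i}$, both condition (1) and the exceptional directions for condition (2') are handled simultaneously (since $(w'_i)^{-1}(\bar y\d(\a))\in\Sigma_{J_i}$ then follows), and your contradiction argument goes through. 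The paper organizes this as a direct dichotomy rather than a contradiction: either every $(J,w)$-structure on $sy\d(s)$ transports to $y$ (whence $sy\d(s)$ has (NLO)), or some specific failure forces $w^{-1}(\a)\notin\Sigma_J$, and then Lemma~\ref{alpha-jj} gives $(w')^{-1}(\a)\in\Sigma_{J'}$ for \emph{every} $(J',w')$ on $y\d(s)$, proving (NLO) for $y\d(s)$ outright.
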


\begin{proof}
Case 1: Assume that for any $J \subset \SS$ with $\d(J)=J$ and $w \in W$ such that $s y \d(s) \ba$ is a $(J, w, \d)$-alcove, $y \ba$ is also a $(J, \bar s w, \d)$-alcove. This in particular includes part (1) of the theorem.

So assume that $y$ satisfies property (NLO), and that $sy\d(s)$ is a $(J, w, \d)$-alcove. In this case, by assumption there is an element $b_J \in \bar s w \tW_J \d(\bar s w)\i$ satisfying conditions (1)-(3) in the definition of property (NLO) for $(y, J, \bar s w, \d)$.

Set $b'_J= s b_J \d(s) \in w \tW_J \d(w)\i$. Then $b'_J$ satisfies conditions (1)-(3) in the definition of property (NLO) for $(s y \d(s), J, w, \d)$.

Case 2: There exists $J \subset \SS$ with $\d(J)=J$ and $w \in W$ such that $s y \d(s) \ba$ is a $(J, w, \d)$-alcove, but $y \ba$ is not a $(J, \bar s w, \d)$-alcove. Hence $w\i(\a) \notin \Sigma_J$ by Lemma \ref{lem.2.1}, where $\a \in \Sigma$ such that $\bar s=s_\a$. We show that $y \d(s)$ satisfies property (NLO).

Assume $y\d(s) \ba$ is a $(J', w', \d)$-alcove for some $w' \in W$ and $J' \subset \SS$ with $\d(J')=J'$. By Lemma \ref{alpha-jj}, ${w'}\i(\a) \in \Sigma_{J'}$. Hence by Lemma \ref{lem.2.1}, $y \ba$ is a $(J', w', \d)$-alcove. Since $y$ satisfies property (NLO), there exists $b_{J'} \in w' \tW_{J'} \d({w'})\i$ such that $\kappa_G(b)=\kappa_G(b_{J'})$, $\nu_{b}=\nu_{b_{J'}}$ and \begin{align*}\kappa_{J'}({w'}\i b_{J'} \d(w'))&=\kappa_{J'}({w'}\i y \d(w'))\\ &=\kappa_{J'}({w'}\i y \d(w') \d({w'}\i s w'))=\kappa_{J'}({w'}\i y\d(s) w'),\end{align*} where the second equality follows from ${w'}\i(\a) \in \Sigma_{J'}$. Hence property (NLO) holds for $y\d(s)$.
\end{proof}

Next we consider the case of minimal length elements:

\begin{prop}\label{basic-step}
If $y$ is a minimal length element in its $\d$-twisted conjugacy class and $y$ satisfies property (NLO), then $X_y(b) \neq \emptyset$.
\end{prop}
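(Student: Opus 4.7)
The plan is to combine the explicit classification of minimal length elements in $\delta$-twisted conjugacy classes from \cite{HN} with the hypothesis (NLO), reducing the nonemptiness question to Kottwitz's classification of basic $\sigma$-conjugacy classes in a Levi subgroup $M_J$.

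First, I would apply the explicit description of \cite{HN} to the minimal length element $y$: this produces a $\delta$-stable subset $J \subset \SS$ and $w \in W$ such that $y\mathbf{a}$ is a $(J, w, \delta)$-alcove and $y' := w^{-1} y \delta(w) \in \tW_J$ is basic in $M_J$ (its Newton vector is central in $M_J$). Property (NLO) at this pair $(J, w)$ then yields $b_J \in w\tW_J \delta(w)^{-1}$ with $\kappa_G(b_J) = \kappa_G(b)$, $\nu_{b_J} = \nu_b$, and $\kappa_J(b_J') = \kappa_J(y')$, where $b_J' := w^{-1} b_J \delta(w)$. Since $b$ is basic, the first two conditions combined with Kottwitz's classification of basic $\sigma$-conjugacy classes in $G(L)$ give $b_J \sim_\sigma b$, and so it suffices to show $y \sim_\sigma b_J$ in $G(L)$.

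For this Levi descent, observe that both $y'$ and $b_J'$ are basic in $M_J$: for $y'$ by the previous step, and for $b_J'$ because $\nu_{b_J} = \nu_b$ is central in $G$, hence $W$-fixed, hence $W_J$-fixed, so $\nu_{b_J'}$ is central in $M_J$. Combined with the equality $\kappa_J(y') = \kappa_J(b_J')$ from (NLO), Kottwitz's classification applied inside $M_J$ yields $y' = m b_J' \sigma(m)^{-1}$ for some $m \in M_J(L)$. Choosing compatible lifts $\tilde w \in N_S(L)$ and using the identities $y = \tilde w y' \sigma(\tilde w)^{-1}$ and $b_J = \tilde w b_J' \sigma(\tilde w)^{-1}$ (the $T(L)_1 \subset I$ ambiguity in these lifts being harmless), a direct computation gives $y = g b_J \sigma(g)^{-1}$ with $g = \tilde w m \tilde w^{-1}$. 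Hence $y \sim_\sigma b_J \sim_\sigma b$ in $G(L)$; writing $y = h b \sigma(h)^{-1}$, the coset $h^{-1} I$ lies in $X_y(b)$.

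The principal difficulty is the first step: invoking the structural theorem of \cite{HN} in precisely the form needed, namely a $(J, w, \delta)$-alcove presentation of the minimal length $y$ in which the twisted element $y'$ is basic in the Levi $M_J$. Once this structural input is in hand, the remainder is a Kottwitz-style Levi descent, with only routine care required to pass between elements of $\tW$ and their lifts to $N_S(L)$.
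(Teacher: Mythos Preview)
Your approach is the same as the paper's in spirit: both show $y \sim_\sigma b$ in $G(L)$ by producing a pair $(J,w)$ for which $y\mathbf a$ is a $(J,w,\delta)$-alcove with $y' = w^{-1}y\delta(w)$ basic in $M_J$, and then invoking (NLO) at $(J,w)$. Where you apply Kottwitz's classification inside $M_J$ to conclude $y' \sim_\sigma b_J'$, the paper instead extracts $\bar\nu_y = \nu_b$ by a direct coroot-lattice computation (Section~\ref{newton}); these are two phrasings of the same step.

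The point you flag as the principal difficulty is precisely where the paper does the real work, and your black-box invocation of \cite{HN} is off in two respects. First, \cite[Theorem~2.10]{HN} does not hand you a $(J,w,\delta)$-alcove presentation; it only guarantees that \emph{some} minimal length element $y''$ in the class satisfies $\bar{\mathbf a}\cap V_{y''}\neq\emptyset$. The paper then \emph{constructs} $J$ as the stabilizer of $\bar\nu_y$ in $\mathbb S$, chooses $w$ so that $w^{-1}\nu_y$ is dominant, and uses the geometric condition $\bar{\mathbf a}\cap V_y\neq\emptyset$ to verify the alcove inequalities $y\mathbf a \ge_\beta \mathbf a$ for $\beta\in w(\Sigma^+-\Sigma_J^+)$. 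Second, because \cite{HN} only produces this for \emph{some} minimal length element, one must first pass from the given $y$ to that $y''$; the paper does this via Theorem~\ref{compatible}(1), which says (NLO) is preserved under length-preserving $\delta$-cyclic shifts, together with the fact that minimal length elements in a class are connected by such shifts. You have not mentioned this reduction, and without it condition~(2) of the $(J,w,\delta)$-alcove definition may simply fail for your original $y$. Once these two points are supplied, your Levi-descent argument goes through and is equivalent to the paper's.
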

\begin{proof}
It suffices to show that $y$ and $b$ are in the same $\s$-conjugacy class. By Kottwitz \cite{kottwitz-isoI} and \cite{kottwitz-isoII}, this is equivalent to show that $\bar \nu_y=\nu_b$ and $\kappa_G(y)=\kappa_G(b)$. Since $y\ba$ is automatically a $(\SS, 1, \d)$-alcove, by our assumption $\kappa_G(y)=\kappa_G(b)$.

By \cite[Theorem 2.10]{HN}, there exists a minimal length element $y'$ in the $\d$-twisted conjugacy class containing $y$ such that $\bar \ba \cap V_{y'} \neq \emptyset$. Here $V_{y'}=\{v \in V; y'\d(v)=v+\nu_{y'}\}$. We may then assume that $y=y'$ (use Theorem~\ref{compatible} (1)).

Let $w \in W$ such that $\bar \nu=w\i(\nu_y)$ is dominant. Then $w(\bar \nu)=\nu_y=\bar y \d (\nu_y)=\bar y \d(w) \d(\bar \nu)$. In other words, $\bar \nu=w\i \bar y \d(w) \d(\bar \nu)$. Since $\d(\bar \nu)$ is the unique dominant coweight in the $W$-orbit of $\bar \nu$, we have $\bar \nu=\d(\bar \nu)$. Set $J=\{s \in \SS; s(\bar \nu)=\bar \nu\}$. Then $\d(J)=J$ and $w\i \bar y \d(w) \in W_J$. 

For any $\b \in \Sigma$ with $w\i(\b) \in \Sigma^+-\Sigma^+_J$, $\<\nu_y, \b\>=\<\bar \nu, w\i(\b)\> > 0$. Hence $y \ba \ge_{\b} \ba$ as $\bar \ba$ intersects with $V_{y}$. Therefore $y \ba$ is a $(J, w, \d)$-alcove.

Since $y$ satisfies property (NLO), there exists $b_J \in w \tW_J \d(w)\i$ such that $\nu_{b_{J}} = \nu_b$ and $\kappa_{J}(w\i b_{J} \d(w)) = \kappa_{J}(w\i y\d(w))$. If we write $w\i y \d(w)=\e^\l u$ and $w\i b_J \d(w)=\e^{\l'} u'$, $u, u' \in W_J$, then for a suitable coweight $\th$, 
\[
\l-\l'+\th-\d(\th) \in Q^\vee_J.
\] 
Thus $\nu_\l-\nu_{\l'} \in Q^\vee_J \otimes_\ZZ \QQ$. By Section \ref{newton} (1), $\nu_{w\i y \d(w)}-\nu_{w\i b_J \d(w)} \in Q^\vee_J \otimes_\ZZ \QQ$. Since $\nu_{w\i y \d(w)}=\bar \nu$ is orthogonal to all the roots in $J$ and  $\nu_{w\i b_J \d(w)}=\nu_b$ is central, by Section \ref{newton} (2), $\bar \nu_y=\nu_{w\i y \d(w)}=\nu_b$. 
\end{proof}

Altogether, we have now proved:

\begin{theorem}\label{thm-nonemptiness}
Let $x\in \tW$. If $x$ satisfies property (NLO), then $X_x(b) \ne \emptyset$.
\end{theorem}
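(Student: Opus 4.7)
The plan is to prove this by strong induction on $\ell(x)$, combining Proposition~\ref{basic-step} (the base case) with the compatibility of property (NLO) with the reduction \`a la Deligne-Lusztig established in Theorem~\ref{compatible}, in exactly the same pattern as the proof of Theorem~\ref{dxd}.

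First, if $x$ is already of minimal length in its $\d$-twisted conjugacy class $\co$, then since $x$ satisfies (NLO) by hypothesis, Proposition~\ref{basic-step} immediately gives $X_x(b)\ne\emptyset$. So assume $x$ is not of minimal length in $\co$. By Theorem~\ref{min-co} applied to $x$, there is a chain $x=x_0 \overset{s_1}{\to}_\d x_1 \overset{s_2}{\to}_\d \cdots \overset{s_r}{\to}_\d x_r = x'$ with $x'\in\co_{\min}$, and since $\ell(x)>\ell(x')$ at least one elementary step strictly decreases the length. As in the proof of Theorem~\ref{dxd}, after truncating at the first length-decreasing step and conjugating, we may arrange that $x\to_\d x''$ with $\ell(x)=\ell(x'')$, and that there is $s\in\tSS$ with $\ell(sx''\d(s))=\ell(x'')-2$.

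Next I would propagate property (NLO) along this chain. Each elementary step $x_i \overset{s_{i+1}}{\to}_\d x_{i+1}$ in the chain $x\to_\d x''$ is length-preserving, so by repeated application of Theorem~\ref{compatible}(1), property (NLO) transfers from $x$ to $x''$. Then Theorem~\ref{compatible}(2) gives that property (NLO) holds for at least one of $sx''\d(s)$ and $x''\d(s)$; both of these elements have length $\ell(x)-2$, hence by the induction hypothesis, $X_{sx''\d(s)}(b)\ne\emptyset$ or $X_{x''\d(s)}(b)\ne\emptyset$.

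Finally I would pull non-emptiness back up the chain using the reduction lemma. By Proposition~\ref{dl-red}(2) applied to the pair $(x'',s)$, we conclude $X_{x''}(b)\ne\emptyset$. Then iterating Proposition~\ref{dl-red}(1) along the length-preserving chain $x \to_\d x''$ yields $X_x(b)\ne\emptyset$, completing the induction. The whole argument is essentially a bookkeeping synthesis of results already set up; no new technical obstacle arises here because all the real work (the reduction method, the existence of a well-behaved chain to a minimal length element, the compatibility of (NLO) with the two types of elementary moves, and the treatment of minimal length elements via Newton vectors and $\kappa_J$) has been done in Propositions~\ref{dl-red}, \ref{basic-step} and Theorems~\ref{min-co}, \ref{compatible}. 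The only subtlety to handle carefully is that Theorem~\ref{compatible}(2) is a disjunction, so the induction must allow the non-empty witness to sit on either branch, which is exactly what Proposition~\ref{dl-red}(2) is set up to accommodate.
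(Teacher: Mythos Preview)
Your approach is essentially the paper's own—both combine Theorem~\ref{compatible} with Proposition~\ref{basic-step} via induction on length, the paper packaging the induction as ``there exists $y\in D_{x,\d}$ satisfying (NLO)'' and then invoking Theorem~\ref{dxd}. Two small slips in your write-up should be fixed, though.

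First, $\ell(x''\d(s)) = \ell(x'')-1 = \ell(x)-1$, not $\ell(x)-2$; this is harmless since strong induction still applies. Second, and more substantively, there is a left/right mismatch you glossed over: Proposition~\ref{dl-red}(2) gives the disjunction $X_{sx''\d(s)}(b)\ne\emptyset$ or $X_{sx''}(b)\ne\emptyset$, whereas Theorem~\ref{compatible}(2) hands you (NLO) for $sx''\d(s)$ or $x''\d(s)$—note $sx''$ versus $x''\d(s)$. To close this gap, observe that $s(sx'')\d(s)=x''\d(s)$ and $\ell(sx'')=\ell(x'')-1=\ell(x''\d(s))$, so Proposition~\ref{dl-red}(1) gives $X_{sx''}(b)\ne\emptyset \iff X_{x''\d(s)}(b)\ne\emptyset$, and your argument then goes through cleanly.
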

\begin{rem}
For split groups, this was conjectured in \cite{GHKR2}, Conjecture 9.4.2.
\end{rem}
\begin{proof} Since $x$ satisfies property (NLO), there exists $y \in D_{x, \d}$ also satisfies property (NLO). By Proposition \ref{basic-step}, $X_y(b) \neq \emptyset$. Hence by Theorem \ref{dxd}, $X_x(b) \neq \emptyset$.
\end{proof}

Now combining Theorem \ref{thm-nonemptiness} with Proposition \ref{reuman2}, we have 
\begin{prop}\label{reuman4}
Let $b \in G(L)$ be basic and $x \in \tW$ lie in the shrunken Weyl chambers such that $\k_G(b)=\k_G(x)$. If $\eta_\d(x)\in W-\bigcup_{J\subsetneq \SS, \d(J)=J} W_J$, then $X_x(b) \neq \emptyset$. 
\end{prop}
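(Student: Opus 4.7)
The statement is essentially an immediate consequence of Theorem~\ref{thm-nonemptiness} and Proposition~\ref{reuman2}, as indicated in the paper. My plan is to verify that the hypotheses of Theorem~\ref{thm-nonemptiness}, namely property (NLO) for $x$ with respect to $b$, are forced by our assumptions.

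First I would use Proposition~\ref{reuman2} to severely restrict the pairs $(J, w)$ for which $x\ba$ is a $(J, w, \d)$-alcove. If such a pair exists with $J \subsetneq \SS$ and $\d(J) = J$, then Proposition~\ref{reuman2} yields $\eta_\d(x) \in W_J$, contradicting the hypothesis $\eta_\d(x) \in W - \bigcup_{J \subsetneq \SS,\, \d(J)=J} W_J$. Hence the only case to consider in the definition of (NLO) is $J = \SS$.

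Next I treat $J = \SS$. Here $\tW_\SS = \tW$, $\k_\SS = \k_G$, and condition~(2) in the definition of a $(\SS, w, \d)$-alcove is vacuous since $\Phi^+ - \Phi^+_\SS = \emptyset$, while condition~(1) reads $w^{-1} x \d(w) \in \tW$, which is automatic. So $x\ba$ is a $(\SS, w, \d)$-alcove for every $w \in W$, and I must produce a single $b_\SS \in \tW$ verifying (NLO) for all such $w$. To this end I invoke Proposition~\ref{s-conj} to choose a length-zero element $b_\SS \in \tW$ whose $\s$-conjugacy class in $G(L)$ is $[b]$. Then $\k_G(b_\SS) = \k_G(b)$ and $\nu_{b_\SS} = \nu_b$, giving conditions (1) and (2) of (NLO). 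For condition (3), I use that $W \subset W_a \subset \ker \k_G$ (and $\d$ preserves $W_a$), so for every $g \in \tW$ one has $\k_G(w^{-1} g \d(w)) = \k_G(w)^{-1} \k_G(g) \k_G(\d(w)) = \k_G(g)$. Combined with the hypothesis $\k_G(x) = \k_G(b)$, this yields
\[
\k_\SS(w^{-1} b_\SS \d(w)) = \k_G(b_\SS) = \k_G(b) = \k_G(x) = \k_\SS(w^{-1} x \d(w)),
\]
so (3) holds as well.

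With property (NLO) established for $x$, Theorem~\ref{thm-nonemptiness} directly gives $X_x(b) \neq \emptyset$. There is no real obstacle here: Propositions~\ref{reuman2} and~\ref{s-conj} together with Theorem~\ref{thm-nonemptiness} do all the work, and this proposition is the clean ``if'' half of the shrunken-Weyl-chamber criterion, mirroring the emptiness direction of Proposition~\ref{reuman3}. I note that, unlike Theorem~B, no $\d$-connectedness assumption on the Dynkin diagram is needed here, since $\d$-connectedness was used only for the emptiness direction.
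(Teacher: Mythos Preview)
Your proof is correct and follows exactly the approach the paper indicates (``combining Theorem~\ref{thm-nonemptiness} with Proposition~\ref{reuman2}''), filling in the details the paper omits. One minor citation point: the fact that every basic $\sigma$-conjugacy class is represented by a length-zero element of $\tW$ is not the statement of Proposition~\ref{s-conj} itself but is established in its proof (and recalled in the introduction); otherwise everything is in order.
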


\end{document}